\documentclass{article}
\usepackage{graphicx}
\usepackage{pgfplots}
\usepackage{subfigure}
\usepackage{caption}
\usepackage[justification=centering]{caption}
\usepackage{amsmath}
\usepackage{amssymb}
\usepackage{amsthm}
\usepackage{listings}
\usepackage{epstopdf}
\usetikzlibrary{arrows}
\usepackage{tikz}
\usepackage{float}
\usepackage[linesnumbered,ruled]{algorithm2e}
\usetikzlibrary{shapes.geometric, arrows}
\usepackage[utf8]{inputenc}
\usepackage{enumitem}
\usepackage[top=2.5cm, bottom=2.5cm, left=3cm, right=2.5cm]{geometry}
\usepackage{multicol}
\newtheorem{example}{Example}
\newtheorem{theorem}{Theorem}

\usepackage[titletoc,title]{appendix}
\usepackage{array}
\usepackage{authblk}

\usepackage[compress]{cite}

\begin{document}

	\title{Abstract Similarity, Fractals and Chaos \\ \vspace{0.7cm} \hspace{10.5cm} \footnotesize \textit{The art of doing mathematics consists in} \\ \hspace{10.6cm} \textit{finding that special case which contains} \\ \hspace{8.82cm}  \textit{all the germs of generality.} \\ \vspace{0.2cm} \hspace{14.2cm} \textit{David Hilbert}}	
	\author[]{Marat Akhmet\thanks{Corresponding Author Tel.: +90 312 210 5355, Fax: +90 312 210 2972, E-mail: marat@metu.edu.tr} }
	\author[]{Ejaily Milad Alejaily}
	\affil[]{Department of Mathematics, Middle East Technical University, 06800 Ankara, Turkey}

	\date{}
	\maketitle
	
	To prove presence of chaos for fractals, a new mathematical concept of abstract similarity is introduced. As an example, the space of symbolic strings on a finite number of symbols is proved to possess the property. Moreover, Sierpinski fractals, Koch curve as well as Cantor set satisfy the definition. A similarity map is introduced and the problem of chaos presence for the sets is solved by considering the dynamics of the map. This is true for Poincar\'{e}, Li-Yorke and Devaney chaos, even in multi-dimensional cases. Original numerical simulations which illustrate the results are delivered.
	
	\section{Introduction}
	
	Self-similarity is one of the most important concepts in modern science. From the geometrical point of view, it is defined as the property of objects whose parts, at all scales, are similar to the whole. Dealing with self-similarity goes back to the 17th Century when Gottfried Leibniz introduced the notions of recursive self-similarity \cite{Zmeskal}. Since then, history has not recorded any thing about self-similarity until the late 19th century when Karl Weierstrass introduced in 1872 a function that being everywhere continuous but nowhere differentiable. The graph of the Weierstrass function became an example of a self-similar curve. The set constructed by Georg Cantor in 1883 is considered as the most essential and influential self similar set since it is a simple and perfect example for theory and applications of this field. Space-filling curves are substantial epitomai of continuous self-similar curves which were described by Giuseppe Peano and David Hilbert in 1890-91. Other examples of self-similar sets are Koch curve discovered by Helge von Koch in 1904 and Sierpinski gasket and carpet which are introduced by Waclaw Sierpinski in 1916. Julia sets gained significance in being generated using the dynamics of iterative function. They are discovered by Gaston Julia and Pierre Fatou in 1917-19, where they studied independently the iteration of rational functions in the complex plane. The term “\textit{fractal}” was coined by Benoit Mandelbrot in 1975 \cite{Mandelbrot0} to describe certain geometrical structures that exhibit self-similarity. Since then, this word has been employed to denote all the above mentioned sets, and the field became known as \textit{fractal geometry}. Consequently, the fractal concept is axiomatically linked with the notion of self-similarity which is considered to be one of the acceptable definitions of fractals. That is, a fractal can be defined as a set that display self-similarity at all scales. However, Mandelbrot define a fractal as a set whose Hausdorff dimension strictly larger than its topological dimension \cite{Mandelbrot1}.	To sum up, self-similarity and fractional dimension are the most two important features of fractals. The connection between them is that self-similarity is the easiest way to construct a set that has fractional dimension \cite{Crownover}.
		
	Chaos, in general,  can be defined as aperiodic long-term behavior in a deterministic system that exhibits sensitive dependence on initial conditions \cite{Kellert}. The first recognition of chaos phenomenon was indicated in the work of Henri Poincar\`{e} in 1890 when he studied the problem of the stability of the solar system. In the 1950s, Edward Lorenz discovered \textit{sensitivity to initial conditions} in a weather forecasting model. This property is considered as a main ingredient of chaos. There are different types and definitions of chaos. Devaney \cite{Devaney} and Li-Yorke \cite{Yorke} chaos are the most frequently used types, which are characterized by transitivity, sensitivity, frequent separation and proximality. Another common type occurs through period-doubling cascade which is a sort of route to chaos through local bifurcations \cite{Feigenbaum,Scholl,Sander}. In the papers \cite{AkhmetUnpredictable,AkhmetPoincare}, Poincaré chaos was introduced through the unpredictable point concepts. Further, it was developed to unpredictable functions and sequences.
	
	Several researches pointed out that a close relationship between chaos and fractal geometry can be observed. It can be seen, for instant, in the dynamics of Fatou-Julia iteration used to construct Julia and Mandelbrot sets where two neighbor points in the domain which are close to the boundary may have completely different behavior. That is, we can say about sensitivity in fractal structures. Chaos tells us about the state of irregularity and divergence of trajectories which depend in the nature of the dynamics, whereas the fractal concept can be used to study complex geometric structures. Therefore, the interlink between chaos and fractals is more clear when fractal dimension is used to measure the extent to which a trajectory fills its phase space. In other words, fractal dimension of the orbit in phase space implies the existence of a strange attractor \cite{Moon}. The fundamental work on the chaotic nature of fractals has been done only for specific types categorized under the totally disconnected fractals \cite{BarnsleyB,Devaney,Chen}. In that work, the topological conjugacy concept was utilized to prove that these fractal sets are invariant for certain chaotic maps. Except for that, relatively few studies have been carried out on chaotic dynamical systems for fractals, and perhaps the most relevant one is what have been done on the Sierpinski carpet in \cite{Ercai}. In that research, the author shows that the dynamical system associated with a shift transformation defined on the Sierpinski carpet set is chaotic in the sense of topological mixing. Our results have a special importance that we do not utilize topological conjugacy to prove the presence of chaos in fractal sets. Moreover, we consider different types of chaos, namely, Devaney, Li-Yorke and Poincaré, and the results cover several kinds of self-similar fractals such as Cantor sets, Sierpinski fractals and Koch curve. Chaos in fractals, particularly Sierpinski carpet and Koch curve, has not been considered in the literature before.  Furthermore, our approach is applicable for sets in multi-dimensional spaces. A good example is the logistic map. It was shown that a chaos equivalent to Li-Yorke type can be extended to higher-dimensional discrete systems \cite{Diamond,Marotto,Dohtani}. This requires employing special theorems like Marotto Theorem \cite{Marotto}. Applying chaotic abstract similarity developed in our paper, we have shown that Devaney, Li-Yorke and Poincaré chaos can take place in the dynamics of $ n $ connected perturbed logistic maps. Examples with numerical simulations are provided in Section \ref{DynConstructOfASSS}.
	
	In this paper we concern with self-similarity. This concept is reflected in many problems that arise in various fields such as wavelets, fractals, and graph systems \cite{Jorgensen}. Interesting definitions of self-similarity and related problems of dimension and measure are discussed in papers \cite{Moran,Hutchinson,Hata,Falconer,Edgar,Spear,Bandt,Falconer1,Lau,Ngai}. The manuscripts consider sets in Euclidean space $ \mathbb{R}^n $ and define self-similar set as a union of its images under similarity transfunctions \cite{Falconer1}. Our research develops self-similarity for metric spaces and this why we call it \textit{abstract self-similarity}. The development does not rely on any special functions, and the similarity map used in our paper is applied for chaos formation. The map is different than any similarity map essentially considered in literature before. In the present paper, we are not concerned in analysis of dimension and measure but mostly rely on the distance, since our main goal is to discuss chaos problem for fractals. Nevertheless, we suppose that our suggestion may be useful for the next extension of the results obtained in \cite{Hutchinson,Hata,Falconer,Edgar,Spear,Bandt,Falconer1,Lau,Ngai} for the abstract self-similarity case. First of all those which consider dimension and measure and corresponding problems of fractals.
	
	We define a self-similar set as a collection of points in a metric space, where it can be considered as a union of infinite shrinking sets with notation that allows to introduce dynamics in the set and then prove chaos which is the most important result of the present research. For the purpose, a specific map over the invariant self-similar set is defined. The map acting as the identity if the whole set taken as an argument for the map. This feature is equivalent to the property of self-similarity in the ordinary fractals, and this is the reason for calling this map the similarity map. We expect that the concept of abstract self-similarity will create new frontiers for chaos and fractals investigations, and we hope it will be a helpful tool in other fields such as harmonic analysis, discrete mathematics, probability, and operator algebras \cite{Jorgensen}.  
	
	Our approach with respect to chaos is characterized by the priority of the domain over the map of chaos. More precisely, the usual construction of chaos starts with description of a map with certain properties like unimodality, hyperbolicity, period three and topological conjugacy to a standard chaotic map. Next, chaotic behavior is discovered, and finally, the structure of the chaotic attractor is analyzed. For example, in Li-Yorke chaos, a map is firstly defined such that it has period three property, and then, the domain (scrambled set) for the chaos is characterize by specific properties. The same can be said for the chaos of unimodal maps, when the domain of chaos is a Cantor set. In the Smale horseshoe case, we can conclude that the map and the domain are simultaneously determined so that the construction of the domain started with an initial set and it is developed step by step using a particular map. Therefore, the structure of the domain and the nature of the map are mutually dependent on each other. For the chaos in symbolic dynamics, the domain is primarily described as infinite sequences of symbols then the map is introduced as a shift on the space. However, the map still has priority since the properties of the sequences are described with respect to the map. In the proposed approach, we first construct a domain in a metric space with specific conditions to be a suitable venue for manifestations of chaos. Thereafter, the similarity map is built on the basis of the invariance and self-similarity properties of the domain to define an abstract motion. This is why the map is appropriate for abstract self-similar set as well as for any fractal constructed through self-similarity, and thence proving chaos for these classes of fractals becomes possible. Moreover, we drop the continuity requirement for the motion since the chaotic map need not be continuous \cite{Li,Addabbo}. In paper \cite{Li}, for instance, the authors ignore the continuity of some chaotic maps during the discussion of chaos conditions on the product of semi-flows. We regard the continuity of a chaotic map as an important property only from the analytical side, that is to say, it is very useful for handling the map to prove presence of chaos \cite{Wiggins,Chen}, however, it is not rigorously correct to consider it as an intrinsic property for chaos. Despite the discontinuity of the similarity map, opposite to our desire, one can recognize that presence of sensitivity and the irregular behavior of simulations make the discussion precious for theoretical investigations as well as for future applications.  
	
	\section{Abstract Self-Similarity} \label{DefnASSS}

	Let us consider the metric space $ (\mathcal{F}, d) $, where $ \mathcal{F} $ is a compact set and $ d $ is a metric. We assume that $ \mathcal{F} $ is divided into $ m $ disjoint nonempty subsets, $ \mathcal{F}_i, \; i=1, 2, ..., m $, such that $ \mathcal{F} = \cup_{i=1}^{m} \mathcal{F}_i $. In their own turn, the sets $ \mathcal{F}_i \; i=1, 2, ..., m $, are divided into $ m $ disjoint nonempty subsets $ \mathcal{F}_{ij}, \; j=1, 2, ..., m $, such that $ \mathcal{F}_i = \cup_{j=1}^{m} \mathcal{F}_{ij} $. That is, in general we have $ \mathcal{F}_{i_1 i_2 ... i_n} = \cup_{j=1}^{m} \mathcal{F}_{i_1 i_2 ... i_n j} $, for each natural number $ n $, where all sets $ \mathcal{F}_{i_1 i_2 ... i_n j}, \; j=1, 2, ..., m $, are nonempty and disjoint. We assume that for the sets $ \mathcal{F}_{i_1 i_2 ... i_n} $, the \textit{diameter condition} is valid. That is
	\begin{equation} \label{Diamprop}
	\max_{i_k=1,2, ..., m} \mathrm{diam}(\mathcal{F}_{i_1 i_2 ... i_n}) \to 0 \;\; \text{as} \;\; n \to \infty,
	\end{equation}
	where $ \mathrm{diam}(A) = \sup \{ d(\textbf{x}, \textbf{y}) : \textbf{x}, \textbf{y} \in A \} $, for a set $ A $ in $ \mathcal{F} $. 
	
	Let us construct a sequence, $ p_n $, of points in $ \mathcal{F} $ such that $ p_0 \in \mathcal{F} $, $ p_1 \in \mathcal{F}_{i_1} $, $ p_2 \in \mathcal{F}_{i_1 i_2} $, ... , $ p_n \in \mathcal{F}_{i_1 i_2 ... i_n}, \; n=1, 2, ...\, $. It is clear that,
	\[ \mathcal{F} \supset \mathcal{F}_{i_1} \supset \mathcal{F}_{i_1 i_2} \supset ... \supset \mathcal{F}_{i_1 i_2 ... i_n} \supset \mathcal{F}_{i_1 i_2 ... i_n i_{n+1}} ... , \; i_k=1, 2, ... , m, \; k=1, 2, ... \, . \]
	That is, the sets form a nested sequence. Therefore, due to the compactness of $ \mathcal{F} $ and the diameter condition, there exists a unique limit point for the sequence $ p_n $. Denote the point as $ \mathcal{F}_{i_1 i_2 ... i_n ... } \in \mathcal{F} $, accordingly to the indexes of the nested subsets. Conversely, it is easy to verify that each point $ p \in \mathcal{F} $ admits a corresponding $ p_n $ and it can be written as $ p = \mathcal{F}_{i_1 i_2 ... i_n ...} $, and this representation is a unique one due to the diameter condition. Finally, we have that
	\begin{equation} \label{AbstFracSet}
	\mathcal{F} =  \big\{\mathcal{F}_{i_1 i_2 ... i_n ... } : i_k=1,2, ..., m, \; k=1, 2, ... \big\}, 
	\end{equation}
	and
	\begin{equation} \label{AbstFracSubSet}
	\mathcal{F}_{i_1 i_2 ... i_n} = \bigcup_{j_k=1,2, ..., m } \mathcal{F}_{i_1 i_2 ... i_n j_1 j_2 ... },
	\end{equation}
	for fixed indexes $ i_1, i_2, ..., i_n $.
	
	The set $ \mathcal{F} $ satisfies (\ref{AbstFracSet}) and (\ref{AbstFracSubSet}) is said to be the \textit{abstract self-similar set} as well as the triple $ (\mathcal{F}, d, \varphi) $ the \textit{self-similar space}.

	Let us introduce the map $ \varphi : \mathcal{F} \to \mathcal{F} $ such that
	\begin{equation} \label{MapDefn}
	\varphi (\mathcal{F}_{i_1 i_2 ... i_n ... }) = \mathcal{F}_{i_2 i_3 ... i_n ... }.
	\end{equation}
	Considering iterations of the map, one can verify that
	\begin{equation} \label{MapSubset}
	\varphi^n(F_{i_1 i_2 ... i_n}) = \mathcal{F},
	\end{equation}
	for arbitrary natural number $ n $ and $ i_k=1,2, ..., m, \; k=1, 2, ... \, $. The relations (\ref{MapDefn}) and (\ref{MapSubset}) give us a reason to call $ \varphi $ a \textit{similarity map} and the number $ n $ the \textit{order of similarity}.
	
	In the next example of our paper and in the future studies, it is important to find the structure of abstract self-similar space for a given mathematical object.
	
	\begin{example} \label{Exm1}
		Let us consider the space of symbolic strings of $ 0 $ and $ 1 $ \cite{Wiggins,Devaney}, which is defined  by
		\[ \varSigma=\{ s_1 s_2 s_3 ... : s_k=0 \, \text{or} \, 1 \}. \]
		The distance in $ \varSigma $ is defined by
		\begin{equation} \label{DistMetric}
		d(s, t) = \sum_{k=1}^{\infty} \frac{|s_k-t_k|}{2^{k-1}}, 
		\end{equation}
		where $ s= s_1 s_2 ... $ and $ t= t_1 t_2 ... $ be two elements in $ \varSigma $.
		 
		Considering the pattern of the self-similar set, we denote the elements of the set by $ \varSigma_{s_1 s_2 ...} = s_1 s_2 ... $, and describe the $ n $th order subsets of strings in $ \varSigma $ by
		\[ \varSigma_{s_1 s_2 ... \, s_n} = \{s_1 s_2 ... s_n s_{n+1} s_{n+2} ... : s_k=0 \, \text{or} \, 1 \}, \]
		where $ s_1, s_2, ..., s_n $ are fixed symbols.
		One can show that $ d(s, t) \leq \frac{1}{2^{n-1}} $ for any two elements $ s, t \in \varSigma_{s_1 s_2 ... s_n} $. Moreover, $ d(s_1 s_2 ... s_n 0 0 0 ... \, , s_1 s_2 ... s_n 1 1 1 ...) = \frac{1}{2^{n-1}} $. Therefore, $ \mathrm{diam}(\varSigma_{s_1 s_2 ... s_n}) = \frac{1}{2^{n-1}} $. Consequently,
		\[ \lim_{ n \to \infty} \mathrm{diam}(\varSigma_{s_1 s_2 ... s_n}) = \lim_{ n \to \infty}  \frac{1}{2^{n-1}} =0, \]
		and the diameter condition holds.
		
		The similarity map for the space is the Bernoulli shift, $ \sigma (s_1 s_2 s_3 ... ) = s_2 s_3 s_4 ... $. That is,
		\[ \varphi (\varSigma_{s_1 s_2 s_3 ... }) = \sigma (s_1 s_2 s_3 ... ). \]
		
		On the basis of the above discussion, one can conclude that the triple $ (\varSigma, d, \varphi) $ is a self-similar space. This is a purely illustrative example since it makes us perceive how self-similarity can be defined for abstract objects which are not necessarily geometrical ones. The space of symbolic strings on two symbols has been considered, since it is the most basic example that frequently used to describe the dynamics on symbolic spaces. However, more generally, the space on $ m $ symbols can also be considered.
		  	
	\end{example}

	\section{Similarity and Chaos} \label{AFChaos}
	
	To prove chaos for the self-similar space, we assumed in this section the \textit{separation condition}. Define the distance between two nonempty bounded sets $ A $ and $ B $ in $ \mathcal{F} $ by $ d(A, B)= \inf \{ d(\textbf{x}, \textbf{y}) : \textbf{x}\in A, \, \textbf{y} \in B \} $. The set $ \mathcal{F} $ satisfies the separation condition of degree $ n $ if there exist a positive number $ \varepsilon_0 $ and a natural number $ n $  such that for arbitrary $ i_1 i_2 ... i_n $ one can find $ j_1 j_2 ... j_n $ so that
	\begin{equation} \label{C2}
	d \big( \mathcal{F}_{i_1 i_2 ... i_n} \, , \, \mathcal{F}_{j_1 j_2 ... j_n} \big) \geq \varepsilon_0.
	\end{equation}
	We call $ \varepsilon_0 $ the \textit{separation constant}.	
	
	In the following theorem, we prove that the similarity map $ \varphi $ possesses the three ingredients of Devaney chaos, namely density of periodic points, transitivity and sensitivity. A point $ \mathcal{F}_{i_1 i_2 i_3 ...} \in \mathcal{F} $ is periodic with period $ n $ if its index consists of endless repetitions of a block of $ n $ terms.

	\begin{theorem} \label{Thm1}
	If the separation condition holds, then the similarity map is chaotic in the sense of Devaney.
	\end{theorem}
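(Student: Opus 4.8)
The plan is to verify Devaney's three defining ingredients—density of periodic points, transitivity, and sensitivity—one at a time, exploiting the fact that $\varphi$ acts as an abstract shift on the index sequences and that proximity of two points is controlled by the length of the common prefix of their indices. Throughout, the diameter condition (\ref{Diamprop}) supplies the bridge between the combinatorics of indices and the metric: if two points share their first $n$ symbols, i.e. both lie in $\mathcal{F}_{i_1 i_2 ... i_n}$, then their distance is at most $\mathrm{diam}(\mathcal{F}_{i_1 i_2 ... i_n})$, which is made arbitrarily small by enlarging $n$.

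For density of periodic points, given an arbitrary $p = \mathcal{F}_{i_1 i_2 ...}$ and $\varepsilon > 0$, I would use (\ref{Diamprop}) to fix $n$ with $\mathrm{diam}(\mathcal{F}_{i_1 i_2 ... i_n}) < \varepsilon$ and then take the periodic point $q$ whose index is the endless repetition of the block $i_1 i_2 ... i_n$. Since $p$ and $q$ both lie in $\mathcal{F}_{i_1 i_2 ... i_n}$, we get $d(p,q) < \varepsilon$, and $q$ is periodic of period $n$ by construction. For transitivity, I would build a single point $p^*$ whose index concatenates, in some order, all finite blocks over the alphabet $\{1,2,...,m\}$; then any prefix $j_1 j_2 ... j_n$ occurs somewhere in the index of $p^*$, so a suitable iterate $\varphi^k(p^*)$ lands in $\mathcal{F}_{j_1 j_2 ... j_n}$ and hence within $\mathrm{diam}(\mathcal{F}_{j_1 j_2 ... j_n})$ of the target point $\mathcal{F}_{j_1 j_2 ...}$. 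Because each subset $\mathcal{F}_{i_1 i_2 ... i_n}$ splits into $m \geq 2$ nonempty pieces, $\mathcal{F}$ has no isolated points, and the existence of the dense orbit of $p^*$ then yields topological transitivity.

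Sensitivity is where the separation condition (\ref{C2}) enters, and I expect this to be the crux of the argument. I would take $\varepsilon_0$ itself as the sensitivity constant. Given $p = \mathcal{F}_{i_1 i_2 ...}$ and a neighborhood of radius $\varepsilon$, first choose $N$ with $\mathrm{diam}(\mathcal{F}_{i_1 i_2 ... i_N}) < \varepsilon$. Applying (\ref{C2}) to the length-$n$ block $i_{N+1} ... i_{N+n}$ produces a block $j_1 j_2 ... j_n$ with $d\big(\mathcal{F}_{i_{N+1} ... i_{N+n}}, \mathcal{F}_{j_1 j_2 ... j_n}\big) \geq \varepsilon_0$. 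I would then define $q$ to agree with $p$ on the first $N$ symbols, to carry $j_1 j_2 ... j_n$ in positions $N+1, ..., N+n$, and to be arbitrary thereafter. Then $q \in \mathcal{F}_{i_1 i_2 ... i_N}$ forces $d(p,q) < \varepsilon$, while $\varphi^N(p) \in \mathcal{F}_{i_{N+1} ... i_{N+n}}$ and $\varphi^N(q) \in \mathcal{F}_{j_1 j_2 ... j_n}$ force $d\big(\varphi^N(p), \varphi^N(q)\big) \geq \varepsilon_0$.

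The delicate points to handle carefully are the following. The separation condition only guarantees a separated partner for each block at the \emph{fixed} degree $n$, so the displacement must be engineered at the window $N+1, ..., N+n$ rather than at the front of the index; the point $q$ genuinely differs from $p$, which is automatic since the two blocks lie at positive distance and are therefore distinct; and, for transitivity, that a single dense orbit suffices, which I would justify through the perfectness of $\mathcal{F}$ inferred from $m \geq 2$ together with the diameter condition. Once all three ingredients are secured, the conclusion follows immediately from the definition of Devaney chaos.
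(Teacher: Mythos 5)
Your proposal follows essentially the same route as the paper's proof: periodic density via the repeated block $i_1 i_2 \ldots i_k$ inside a subset of diameter less than $\varepsilon$, transitivity via an index sequence containing every finite block, and sensitivity by keeping the first $N$ symbols of $p$ while placing a block separated by $\varepsilon_0$ in the window $N+1, \ldots, N+n$ and applying $\varphi^N$. Your added remarks (taking $\varepsilon_0$ as the explicit sensitivity constant and invoking the absence of isolated points to pass from a dense orbit to transitivity) only make explicit what the paper leaves implicit.
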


	\begin{proof}
	Fix a member $ \mathcal{F}_{i_1 i_2 ... i_n ... } $ of $ \mathcal{F} $ and a positive number $ \varepsilon $. Find a natural number $ k $ such that $ \mathrm{diam}(\mathcal{F}_{i_1 i_2 ... i_k}) < \varepsilon $ and choose a $ k $-periodic element $ \mathcal{F}_{i_1 i_2 ... i_k i_1 i_2 ... i_k ...} $ of $ \mathcal{F}_{i_1 i_2 ... i_k} $. It is clear that the periodic point is an $ \varepsilon $-approximation for the considered member. The density of periodic points is thus proved.
	
	Next, utilizing the diameter condition, the transitivity will be proved if we show the existence of an element $ \mathcal{F}_{i_1 i_2 ... i_n ...} $ of $ \mathcal{F} $ such that for any subset $ \mathcal{F}_{i_1 i_2 ... i_k} $ there exists a sufficiently large integer $ p $ so that $ \varphi^p(\mathcal{F}_{i_1 i_2 ... i_n ...}) \in \mathcal{F}_{i_1 i_2 ... i_k} $. This is true since we can construct the sequence $ i_1 i_2 ... i_n ... $ such that it contains all sequences of the type $ i_1 i_2 ... i_k $ as blocks.
		
	For sensitivity, fix a point $ \mathcal{F}_{i_1 i_2 ... } \in \mathcal{F} $ and an arbitrary positive number $ \varepsilon $. Due to the diameter condition, there exist an integer $ k $ and element $ \mathcal{F}_{i_1 i_2 ... i_k j_{k+1} j_{k+2} ...} \neq \mathcal{F}_{i_1 i_2 ... i_k i_{k+1} i_{k+2} ...} $ such that $ d(\mathcal{F}_{i_1 i_2 ... i_k i_{k+1} ...}, \mathcal{F}_{i_1 i_2 ... i_k j_{k+1} j_{k+2} ...}) < \varepsilon $. We precise $ j_{k+1}, j_{k+2}, ... $ such that \[ d(\mathcal{F}_{i_{k+1} i_{k+2} ... i_{k+n}}, \mathcal{F}_{j_{k+1} j_{k+2} ... j_{k+n}} \allowbreak ) > \varepsilon_0, \]
	by the separation condition. This proves the sensitivity.	
	\end{proof}

	For Poincar\`{e} chaos, Poisson stable motion is utilized to distinguish the chaotic behavior instead of the periodic motions in Devaney and Li-Yorke types. Existence of infinitely many unpredictable Poisson stable trajectories that lie in a compact set meet all requirements of chaos. Based on this, chaos can be appeared in the dynamics on the quasi-minimal set which is the closure of a Poisson stable trajectory. Therefore, the Poincar\`{e} chaos is referred to as the dynamics on the quasi-minimal set of trajectory initiated from unpredictable point. For more details we refer the reader to \cite{AkhmetUnpredictable,AkhmetPoincare}.
	
	Next theorem shows that the Poincar\`{e} chaos is valid for the similarity dynamics.

	\begin{theorem} \label{Thm2}
	If the separation condition is valid, Then the similarity map possesses Poincar\`{e} chaos .
	\end{theorem}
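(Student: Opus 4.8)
The plan is to produce a single unpredictable Poisson stable trajectory of $\varphi$ inside the compact set $\mathcal{F}$; by the theory of unpredictable points developed in \cite{AkhmetUnpredictable,AkhmetPoincare}, the closure of such a trajectory is a quasi-minimal set on which the dynamics is Poincar\'e chaotic, and it automatically carries infinitely many unpredictable trajectories. Thus the entire burden reduces to exhibiting one point $\omega=\mathcal{F}_{i_1 i_2 \ldots}$ together with sequences $t_n,\tau_n\to\infty$ and the separation constant $\varepsilon_0>0$ for which $\varphi^{t_n}(\omega)\to\omega$ (positive Poisson stability) and $d\big(\varphi^{t_n+\tau_n}(\omega),\varphi^{\tau_n}(\omega)\big)\ge\varepsilon_0$ (unpredictability).

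I would build the index string $i_1 i_2 \ldots$ by a recursive concatenation that forces ever longer prefixes to recur while simultaneously planting a separated block at each recurrence. Let $N$ and $\varepsilon_0$ be the degree and constant furnished by the separation condition \eqref{C2}. Starting from an arbitrary finite word $W_1$ of length $\ell_1\ge N$, and given $W_n$ of length $\ell_n$, pick any block $B_n$ of length $N$ and, by the separation condition, a companion block $\bar{B}_n$ of length $N$ with $d(\mathcal{F}_{B_n},\mathcal{F}_{\bar{B}_n})\ge\varepsilon_0$; then set
\[ W_{n+1}=W_n\,B_n\,W_n\,\bar{B}_n . \]
Each $W_n$ is a prefix of $W_{n+1}$, and $\ell_{n+1}=2\ell_n+2N$, so the limit defines a point $\omega=\mathcal{F}_{i_1 i_2 \ldots}\in\mathcal{F}$.

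To read off Poisson stability, set $t_n=\ell_n+N$. By construction the prefix $i_1\ldots i_{\ell_n}$ (the first copy of $W_n$ inside $W_{n+1}$) reappears verbatim at positions $t_n+1,\ldots,t_n+\ell_n$ (the second copy of $W_n$), so $\omega$ and $\varphi^{t_n}(\omega)$ share their first $\ell_n$ symbols and both lie in $\mathcal{F}_{i_1\ldots i_{\ell_n}}$. Since $\ell_n\to\infty$, the diameter condition \eqref{Diamprop} forces $d(\varphi^{t_n}(\omega),\omega)\le\mathrm{diam}(\mathcal{F}_{i_1\ldots i_{\ell_n}})\to 0$ with $t_n\to\infty$. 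For unpredictability, take $\tau_n=\ell_n$. Then $\varphi^{\tau_n}(\omega)$ begins with the block $B_n$ (positions $\ell_n+1,\ldots,\ell_n+N$) while $\varphi^{t_n+\tau_n}(\omega)$ begins with $\bar{B}_n$ (positions $2\ell_n+N+1,\ldots,2\ell_n+2N$); that is, the two points lie in $\mathcal{F}_{B_n}$ and $\mathcal{F}_{\bar{B}_n}$ respectively, whence $d(\varphi^{t_n+\tau_n}(\omega),\varphi^{\tau_n}(\omega))\ge\varepsilon_0$ with $\tau_n\to\infty$. Hence $\omega$ is an unpredictable Poisson stable point and the theorem follows.

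The delicate point I would expect to demand the most care is the \emph{simultaneous} arrangement of recurrence and separation: the shift $\tau_n$ must land exactly at the symbol where the two recurring copies of the prefix begin to diverge, so that the diameter condition delivers the closeness $\varphi^{t_n}(\omega)\to\omega$ while the separation condition delivers the persistent gap $\varepsilon_0$. The scheme $W_{n+1}=W_n B_n W_n \bar{B}_n$ is tailored precisely to align these two mechanisms and to keep both $t_n$ and $\tau_n$ diverging. Once the blocks at a given stage are fixed inside $W_{n+1}$, later appended words never touch those positions, so the chosen prefix agreement and the planted separation are preserved; confirming this invariance is the only routine bookkeeping that remains.
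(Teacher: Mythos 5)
Your construction is correct and is exactly what the paper's one-line proof gestures at: the paper defers to ``verification of Lemma 3.1 in \cite{AkhmetPoincare} adopted to the similarity map,'' which amounts to exhibiting an unpredictable Poisson stable point, and your recursive word $W_{n+1}=W_n B_n W_n \bar{B}_n$ with $t_n=\ell_n+N$, $\tau_n=\ell_n$ carries that adaptation out in full, with the positional bookkeeping checking out (the diameter condition gives $\varphi^{t_n}(\omega)\to\omega$ via the shared prefix $W_n$, and the separation condition gives the persistent gap $\varepsilon_0$ between the aligned blocks $B_n$ and $\bar{B}_n$). The only point you inherit from the cited framework without comment --- as does the paper itself --- is that the implication ``unpredictable point $\Rightarrow$ Poincar\'e chaos on the quasi-minimal set'' is established there for continuous dynamics, while the similarity map here need not be continuous.
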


	The proof of the last theorem is based on the verification of Lemma 3.1 in \cite{AkhmetPoincare} adopted to the similarity map.
	
	In addition to the Devaney and Poincar\`{e} chaos, it can be shown that the Li-Yorke chaos also takes place in the dynamics of the map $ \varphi $. The proof of the following theorem is similar to that of Theorem 6.35 in \cite{Chen} for the shift map defined on the space of symbolic sequences.
	 
	\begin{theorem} \label{Thm3}
  	The similarity map is Li–Yorke chaotic if the separation condition holds.
	\end{theorem}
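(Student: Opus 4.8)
The plan is to mirror the classical argument for the Bernoulli shift, exploiting the fact that $\varphi$ acts on a point $\mathcal{F}_{i_1 i_2 \dots}$ simply by deleting its first index, so that $\varphi^p(\mathcal{F}_{i_1 i_2 \dots}) = \mathcal{F}_{i_{p+1} i_{p+2} \dots}$. This collapses the whole question into combinatorics on index sequences, with the diameter condition controlling closeness and the separation condition controlling separation. Recall that to establish Li--Yorke chaos it suffices to exhibit an uncountable set $S \subset \mathcal{F}$ such that every pair of distinct points $x, y \in S$ is \emph{proximal}, $\liminf_{p \to \infty} d(\varphi^p x, \varphi^p y) = 0$, and \emph{frequently separated}, $\limsup_{p \to \infty} d(\varphi^p x, \varphi^p y) > 0$.

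First I would fix the raw material. By the separation condition pick one block $B_0 = i_1 \dots i_n$ together with a partner block $B_0' = j_1 \dots j_n$ satisfying $d(\mathcal{F}_{B_0}, \mathcal{F}_{B_0'}) \geq \varepsilon_0$; note that only a single such separated pair is needed. For proximality, fix constant filler blocks $C_k = \underbrace{1 1 \dots 1}_{L_k}$ of increasing lengths $L_k \to \infty$; by the diameter condition $\mathrm{diam}(\mathcal{F}_{C_k}) \to 0$.

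Next I would build the scrambled set. Choose a map $r : \mathbb{N} \to \mathbb{N}$ for which every fibre $r^{-1}(m)$ is infinite. To each parameter $\omega = \omega_1 \omega_2 \dots \in \{0,1\}^{\mathbb{N}}$ associate the point $x_\omega \in \mathcal{F}$ whose index sequence is the concatenation, over stages $k = 1, 2, \dots$, of the common filler block $C_k$ followed by the coding block equal to $B_0$ if $\omega_{r(k)} = 0$ and to $B_0'$ if $\omega_{r(k)} = 1$. Every such concatenation indexes a genuine point of $\mathcal{F}$ by the nested-intersection construction, and $\omega \mapsto x_\omega$ is injective, so $S = \{x_\omega\}$ is uncountable. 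Proximality of any pair follows by applying $\varphi$ to land at the start of a filler block: both images then share their first $L_k$ indices and hence lie in $\mathcal{F}_{C_k}$, giving $d(\varphi^{q_k} x_\omega, \varphi^{q_k} x_{\omega'}) \leq \mathrm{diam}(\mathcal{F}_{C_k}) \to 0$. For separation, if $\omega \neq \omega'$ they differ in some coordinate $m$; at every stage $k$ with $r(k) = m$ --- and there are infinitely many --- the two coding blocks are $B_0$ and $B_0'$, so shifting to the start of that block yields $d(\varphi^{p_k} x_\omega, \varphi^{p_k} x_{\omega'}) \geq \varepsilon_0$, whence $\limsup \geq \varepsilon_0 > 0$.

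The step I expect to be the crux is guaranteeing that \emph{every} pair in $S$ is separated infinitely often, not merely once. The naive coding, in which stage $k$ reads the bit $\omega_k$ directly, fails: two parameters may differ in only one coordinate and would then be separated at a single stage only, collapsing the $\limsup$ requirement. Routing the bits through the infinite-to-one map $r$ repairs this, since a lone disagreement in $\omega$ is read back at infinitely many stages, forcing $\limsup \geq \varepsilon_0$ for that pair while the shared fillers keep $\liminf = 0$. The remaining points --- that concatenations are admissible index sequences, that the two families of distinguished positions $q_k$ and $p_k$ both tend to infinity, and the injectivity of $\omega \mapsto x_\omega$ --- are routine consequences of the diameter condition and the uniqueness of the symbolic representation established in Section~\ref{DefnASSS}.
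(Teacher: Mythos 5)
Your construction is correct and is essentially the argument the paper intends: the paper gives no details, saying only that the proof is ``similar to that of Theorem 6.35 in \cite{Chen} for the shift map'' on symbolic sequences, i.e.\ the standard uncountable-scrambled-set construction, which is exactly what you carry out (shrinking filler blocks via the diameter condition for $\liminf = 0$, a single separated pair of blocks revisited at infinitely many shifts for $\limsup \geq \varepsilon_0$). Your point about routing the bits of $\omega$ through an infinite-to-one reindexing so that every pair of distinct parameters is separated at infinitely many times is precisely the detail that makes the adaptation to the similarity map go through.
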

	
	\begin{example}
	We have shown that the space of symbolic strings is a self-similar set in Example \ref{Exm1}. One can see that $ \varSigma = \varSigma_0 \cup \varSigma_1 $, where $ \varSigma_0= \{ 0 s_2 s_3 ... \} $ and $ \varSigma_1= \{ 1 s_2 s_3 ... \} $, hence,
	
	\begin{equation*} \label{VDP System}
	\begin{split}
		d(\varSigma_0, \varSigma_1)&= \inf \{ d(s, t) : s \in \varSigma_0, \, t \in \varSigma_1 \} \\
	&= d(0 0 0 ... \, , 1 0 0 0 ...) \\
	&= d(0 1 1 1 ... \, , 1 1 1 ...) = 1.
	\end{split}	
	\end{equation*}
	Therefore, the separation condition of degree $ 1 $ holds with the separation constant $ \varepsilon_0 $ equal to unity.
	
	According to the results of this section, the Bernoulli shift is chaotic in the sense of Poincar\`{e}, Li-Yorke and Devaney. That is, we confirm one more time the presence of chaos which have been proven for the dynamics in \cite{Wiggins,Devaney,AkhmetReplication,AkhmetPoincare}.
	\end{example}

	\section{Chaos in Fractals}
	
	As implementations of abstract self-similarity, we consider several examples of fractals namely Sierpinski carpet, Sierpinski gasket, Koch curve and Cantor set. This consists of two main tasks. The first one is to indicate abstract self-similarity for the fractals, and the second one is to ascertain chaos according to the results of the last section.
		
	\subsection{Chaos for Sierpinski carpet} \label{SCexm}	

	Let $ S $ be the Sierpinski carpet constructed in a unit square. In what follows, we are going to find the structure of the abstract self-similar space for the Sierpinski carpet. We shall denote the abstract set by the italic $ S $. Let us start by dividing the carpet into eight subsets and denote them as $ S_1, S_2, ... S_8 $ (see Fig. \ref{SCChaos1} (a)). The subsets will be determined such that any couple of adjacent subsets have common horizontal or vertical boundary line. For this reason, we shall use the \textit{boundary agreement} such that: $ (i) $ The points of the common boundary of two horizontally adjacent subsets belong to the left one. $ (ii) $ The points of the common boundary of two vertically adjacent subsets belong to the lower one. Figure \ref{SCChaos1} (b) illustrates the boundary agreement, $ (i) $ and $ (ii) $, for the subsets, $ S_5 $, $ S_7 $ and $ S_8 $. For clarification the boundaries are shown by black lines and we see that the common boundary points of $ S_5 $ and $ S_8 $ belong to $ S_5 $ not to $ S_8 $ and the common boundary points of $ S_7 $ and $ S_8 $ belong to $ S_7 $ not to $ S_8 $. In the second step, each subset $ S_i, \, i=1,2, ..., 8 $ is again subdivided into eight smaller subsets denoted as $ S_{ij}, \, j=1,2, ..., 8 $.
	\begin{figure}[H]
	\centering
	\subfigure[]{\includegraphics[width=0.40\linewidth]{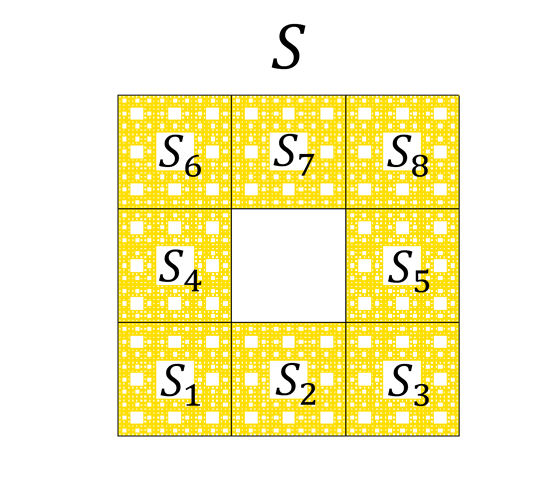}}\hspace{1cm}
	\subfigure[]{\includegraphics[width=0.30\linewidth]{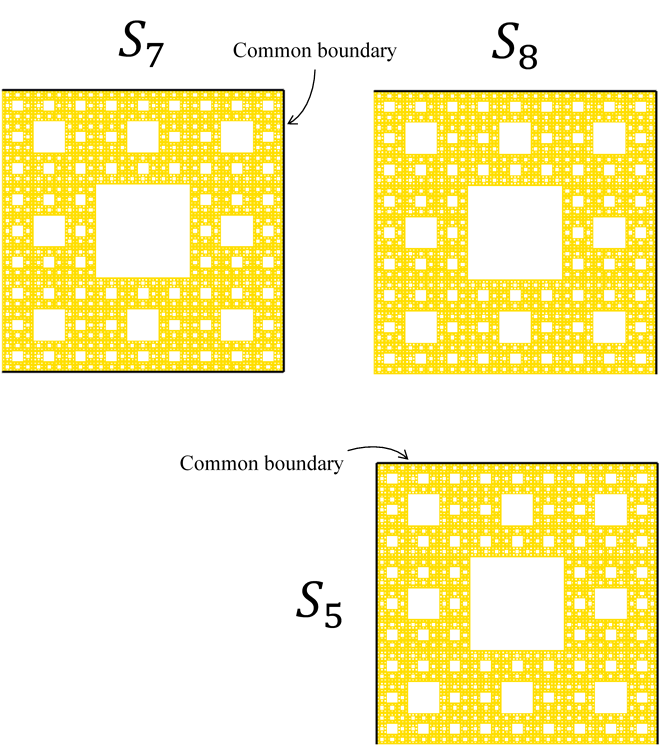}}
	\caption{(a) The first step of abstract self-similar set construction. (b) The illustration of the boundary agreement.}
	\label{SCChaos1}	
	\end{figure}
	Continuing in the same manner, the subsets of higher order can inductively be determined such that at each $ n^{th} $ step we have $ 8^n $ subsets notated as $ S_{i_1 i_2 ... i_n}, \, i_k=1,2, ..., 8 $. Figure \ref{SCChaos2} (a) and (b) show, for example, the subsets of $ S_1 $ and subsets of $ S_{11} $ respectively.
	
	\begin{figure}[H]
	\centering
	\subfigure[]{\includegraphics[width=0.36\linewidth]{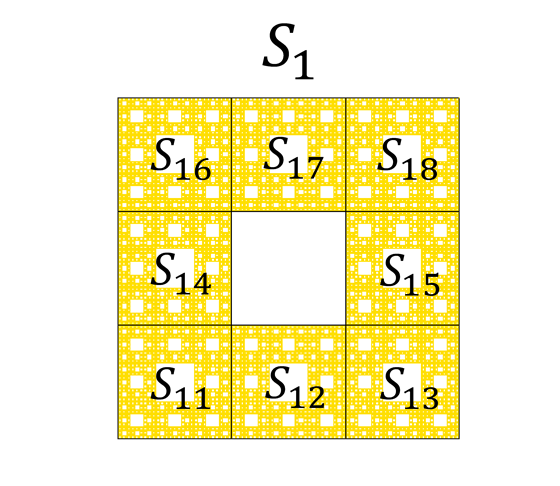}}\hspace{1cm}
	\subfigure[]{\includegraphics[width=0.32\linewidth]{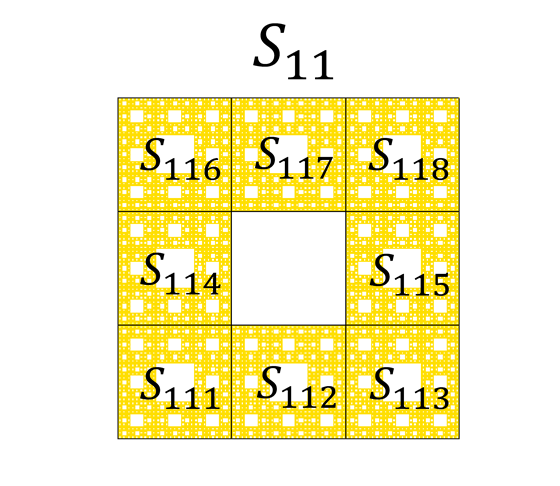}}
	\caption{Examples of the $ 2^{nd} $ and the $ 3^{rd} $ order subsets of the Sierpinski carpet}
	\label{SCChaos2}	
	\end{figure}
	
	To determine the distance between the points of $ S $, we will apply the corresponding Euclidean distance for the set $ S $ such that if $ \textbf{x}=(x_1, x_2) $ and $ \textbf{y}=(y_1, y_2) $ are two points in $ S $, then, $ d(\textbf{x}, \textbf{y})=\sqrt{(x_1-y_1)^2+(x_2-y_2)^2} $. The diameter of a subset at an $ n^{th} $ step is equal to, $ \mathrm{diam}(S_{i_1 i_2 i_3 ... i_n})=\frac{\sqrt{2}}{3^n} $, and therefore, it diminishes to zero as  $ n $ tends to infinity, and the diameter condition holds. It is easy to check that each point in $ S $ has a unique presentation $ S_{i_1 i_2 ... i_n ... } $. Hence, the set $ S $ can be written as
	\[ S =  \big\{S_{i_1 i_2 ... i_n ... } : i_k=1,2, ... 8, \; k \in \mathbb{N} \big\}. \]
	
	The separation condition of degree $ 1 $ is satisfied and the separation constant is
	\[ \varepsilon_0 = \big\{ \min \{ d(S_i, S_j) \} : S_i, S_j \text{ are disjoint}, \, i,j=1,2, ..., 8 \big\} = \frac{1}{3}. \]

	Let us now define the similarity map by
	\[ \varphi(S_{i_1 i_2 i_3 ... })=S_{i_2 i_3 ... }. \]
	Thus, we have shown that the triple $ (S, d, \varphi) $ is a self similar space with the separation condition.  In view of Theorems \ref{Thm1}, \ref{Thm2} and \ref{Thm3}, the similarity map $ S $ is chaotic in the sense of Poincar\'{e}, Li-Yorke and Devaney.
	
	\subsection{A chaotic trajectory in the Sierpinski carpet}
	
	In this section, we provide a geometric realization of the similarity map on the Sierpinski carpet and see how the map can be useful for visualizing the trajectories of the points of a self-similar set and indexing its subsets. A chaotic trajectory is seen as expected in the last section. In the paper \cite{Akhmet}, we adopt the idea of Fatou-Julia iteration (also called Escape Time Algorithm (ETA)\cite{BarnsleyB}) and develop a scheme for constructing the Sierpinski carpet. The scheme is based on the iterations of  the modified planar tent map
	
	\begin{equation*} \label{ModTentMap}
	\begin{split}	
	&T(x) = \left\{ \begin{array}{ll}\vspace{2mm}
	3 \, [x (\text{mod} 1)] & \quad x \leq \frac{1}{2} \; \text{or} \; x > 1, \\ 
	3 (1-x) & \quad \frac{1}{2} < x \leq 1,
	\end{array}	\right.\\
	&T(y) = \left\{ \begin{array}{ll}\vspace{2mm}
	3 \, [y (\text{mod} 1)] & \quad y \leq \frac{1}{2} \; \text{or} \; y > 1, \\ 
	3 (1-y) & \quad \frac{1}{2} < y \leq 1.
	\end{array}	\right.
	\end{split}			
	\end{equation*}
	Depend on this, one can construct a map $ \bar{T}=(\bar{T}_1, \bar{T}_2): S \to S $ such that the set $ S $ is invariant,
	
	\begin{equation} \label{InvModTentMap}
	\begin{split}	
	&\bar{T}_1(x) = \left\{ \begin{array}{ll} \vspace{2mm}
	3 x & \quad 0 \leq x \leq \frac{1}{3}, \\  \vspace{2mm}
	3 x-1 & \quad \frac{1}{3} < x \leq \frac{1}{2}, \\ \vspace{2mm}
	2-3(1-x) & \quad \frac{1}{2} < x < \frac{2}{3}, \\ 
	3 (1-x) & \quad \frac{2}{3} \leq x \leq 1,
	\end{array}	\right.\\
	&\bar{T}_2(y) = \left\{ \begin{array}{ll} \vspace{2mm}
	3 y & \quad 0 \leq y \leq \frac{1}{3}, \\  \vspace{2mm}
	3 y-1 & \quad \frac{1}{3} < y \leq \frac{1}{2}, \\ \vspace{2mm}
	2-3(1-y) & \quad \frac{1}{2} < y < \frac{2}{3}, \\ 
	3 (1-y) & \quad \frac{2}{3} \leq y \leq 1.
	\end{array}	\right.
	\end{split}			
	\end{equation}
	
	This map is equivalent to the similarity map $ \varphi $ defined above, therefore, the trajectory of a point $ \textbf{x} \in S $ can be visualized using the map (\ref{InvModTentMap}). Figure \ref{ShiftTraj} shows an example of the trajectory for the center point $ \textbf{x} $ of the subset
	\[ S_{27731137313277182431515822461784764852656358462545627125423317216244}. \]
	The points of the trajectory are considered as the centers of the subsets which are determined by $ \bar{T}^k (\textbf{x}), \; k=0, 1, 2, ... \, 68 $. The idea of indexing of the subsets is illustrated in Example \ref{Exm3}.
	
	\begin{figure}[H]
	\centering
	\includegraphics[width=0.50\linewidth]{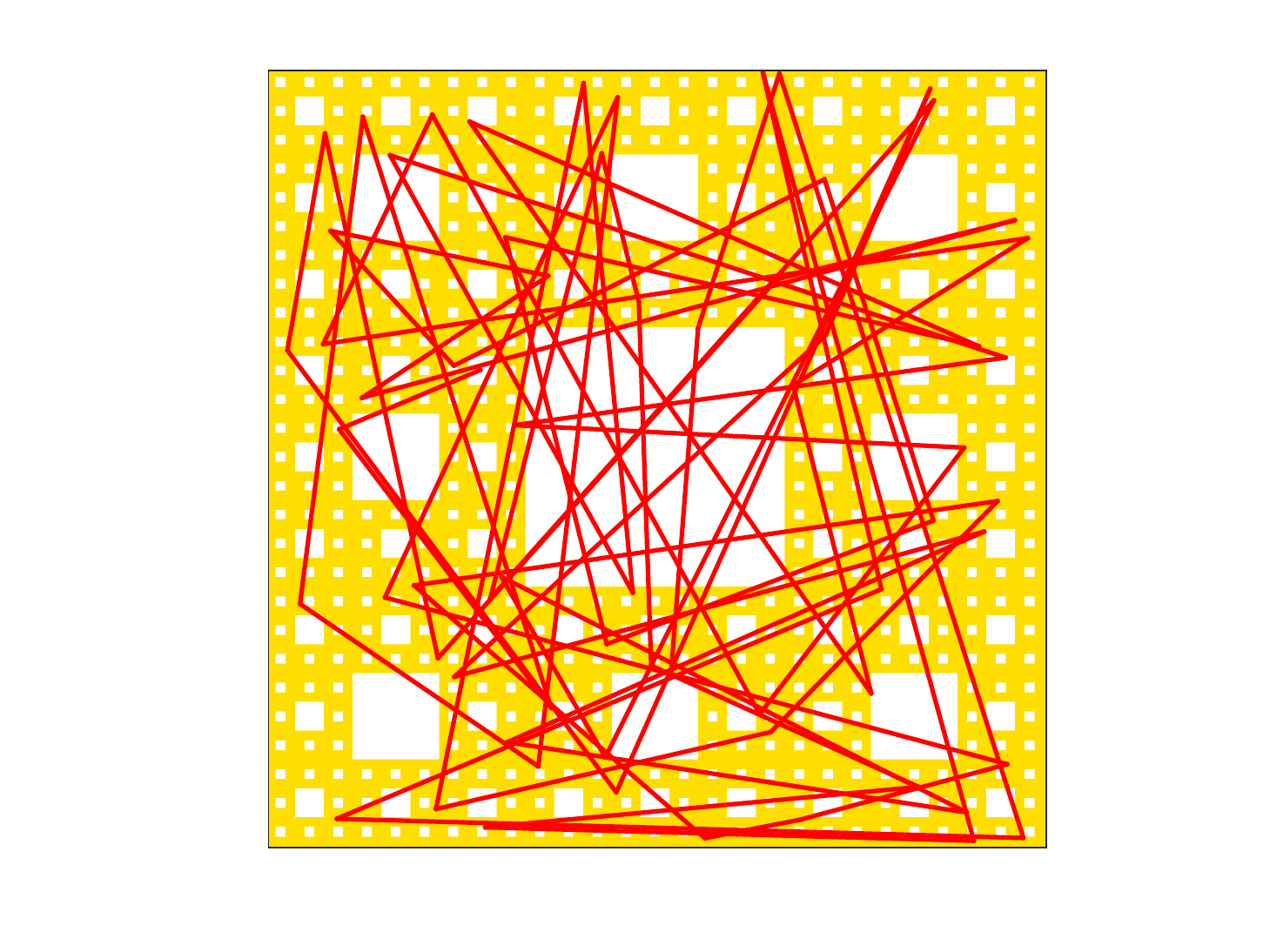}
	\caption{The trajectory of a point under the similarity map}
	\label{ShiftTraj}
	\end{figure}

	\subsection{Sierpinski gasket as chaos domain}
	
	To construct an abstract self-similar set on the basis of the Sierpinski gasket, let us consider the Sierpinski gasket generated in a unit equilateral triangle. We firstly divided the gasket into three smaller parts to be the first order subsets and denoted them by $ G_1 $, $ G_2 $ and $ G_3 $ as shown in Fig. \ref{SGChaos} (a). By glancing at the figure, one can see that every two subsets share only a single point as a common boundary. For this reason we consider the following boundary agreement: The common boundary point of every couple of adjacent subsets belongs either to the left one or to the lower one. Applying the agreement, the subsets $ G_1 $, $ G_2 $ and $ G_3 $, become disjoint subsets of the desired abstract self-similar set $ G $ such that $ G = \cup_{i=1}^{3} G_i $. Secondly, each subset, $ G_i, \; i=1, 2, 3 $, is again subdivided into three subsets, and we notate them as $ G_{ij}, \; i,j=1, 2, 3 $, (see Fig. \ref{SGChaos} (b)). Taking into account the boundary agreement, we repeat the same procedure such that at each $ n^{th} $ step, we denote the resultant subsets by $ G_{i_1 i_2 ... i_n}, \; i_k=1, 2, 3 $.
	
	The subsets of the Sierpinski gasket described above have an inverse relationship with the construction-step variable, $ n $, $ \mathrm{diam}(S_{i_1 i_2 i_3 ... i_n})=\frac{1}{2^n} $, from which one can verify the validity of the diameter condition.

	Following the arguments of the abstract self-similarity, one can deduce that a point in $ G $ can be uniquely represented by $ G_{i_1 i_2 ... i_n ... } $, and the abstract self-similar set $ G $ can be expressed as
	\[ G =  \big\{G_{i_1 i_2 ... i_n ... } : i_k=1,2,3, \; k \in \mathbb{N} \big\}. \]
	
	\begin{figure}[H]
	\centering
	\subfigure[]{\includegraphics[width = 2.5in]{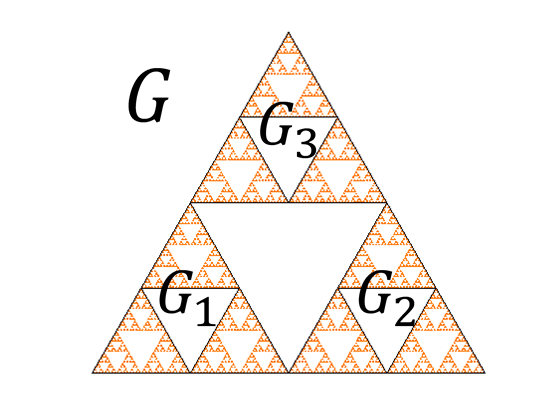}\label{SFG}} 
	\subfigure[]{\includegraphics[width = 2.5in]{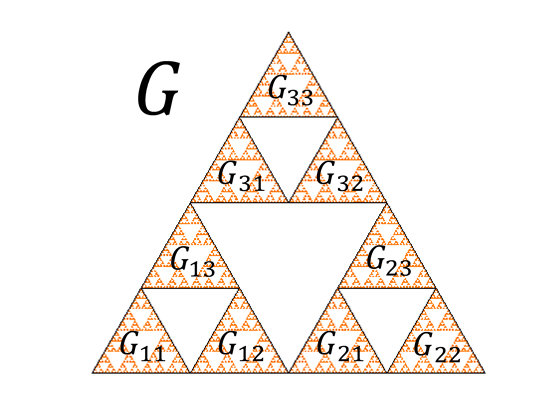}\label{SFC}}
	\caption{The construction of abstract self-similar set corresponding to the Sierpinski gasket}
	\label{SGChaos}   				
	\end{figure}

	Contrary to the Sierpinski carpet case, the separation constant $ \varepsilon_0 $ for the Sierpinski gasket cannot be evaluated through the first order subsets $ G_i, \; i=1, 2, 3 $. This is why we considered the minimum of the distance between any two disjoint subsets of the second order, that is
	\[ \varepsilon_0 = \{ \min \{ d(G_{i_1 i_2}, G_{j_1 j_2}) \} : G_{i_1 i_2}, G_{j_1 j_2} \text{ are disjoint}, \; i_n=1, 2, 3 \} = \frac{\sqrt{3}}{8}, \]
	where $ d $ is the usual Euclidean distance. Thus, one can see that separation condition is valid.
	
	The similarity map acting on the Sierpinski gasket, $ G $, can be defined by $ \varphi(G_{i_1 i_2 i_3 ...}) = G_{i_2 i_3 ...} $. Consequently, the triple $ (G, d, \varphi) $ is a self-similar space and $ \varphi $ is chaotic in the sense of Poincar\'{e}, Li-Yorke and Devaney.
	
	The same idea can be extended to the fractals associated with Pascal's triangles. It is well known that Pascal's triangle in mod 2 creates the classical Sierpinski gasket. Different fractals associated with Pascal's triangles in different moduli can be considered as abstract similar sets and it can also be proved that the similarity map defined on these sets possesses chaos.
	
	\subsection{Koch curve and chaos}
	
	Let us consider the Koch curve, $ K $, constructed from an initial unit line segment. To identify an abstract self-similar set corresponding to the Koch curve, we start by dividing $ K $ into four equal parts (subsets) and denoting them as $ K_1, K_2, K_3 $ and $ K_4 $ as shown in Fig. \ref{KCConst} (a). Since the Koch curve is a connected set, each two adjacent subsets share a single point as a common boundary. Let us denote the end points of each subset $ K_i, \; i=2,3 $ by $ a_i $ and $ a_{i+1} $. Figure \ref{KCConst} (a) illustrates these points and for clarification the points are shown by black thick dots. It is seen in the figure that $ K_1 $ and $ K_2 $ share the point $ a_2 $, $ K_2 $ and $ K_3 $ share the point $ a_3 $ and $ K_3 $ and $ K_4 $ share the point $ a_4 $. In the second step, each subset $ K_i, \; i=1,2,3,4 $ is again subdivided into four subsets $ K_{ij}, \; j=1,2,3,4 $. Figure \ref{KCConst} (b) and (c) illustrate the second step for the subsets $ K_1 $ and $ K_2 $ respectively. Again here we see that each two adjacent subsets share a single boundary point. We continue in the same way such that at each step, $ n $, every set $ K_{i_1 i_2 ... i_{n-1}}, \; i_k=1, 2, 3, 4 $ is divided into four subsets $ K_{i_1 i_2 ... i_{n-1} i_n}, \; i_n=1, 2, 3, 4 $ and denote the end points of each $ K_{i_1 i_2 ... i_n}, \; i_n=2,3 $, by $ a_{i_1 i_2 ... i_n} $ and $ a_{i_1 i_2 ... i_n+1} $.
	
	As in the previous cases, to determine the abstract self-similar set, we need to consider the following boundary agreement: For each adjacent subsets $ K_{i_1 i_2 ... i_{n-1}j} $ and $ K_{i_1 i_2 ... i_{n-1}j+1} $, the common boundary point $ a_{i_1 i_2 ... i_{n-1}j+1} $ belongs to $ K_{i_1 i_2 ... i_{n-1}j+1} $. This condition means that the common boundary point $ a_2 $ shown in Fig. \ref{KCConst} (a), for instance, belongs to $ K_2 $ not to $ K_1 $ and the common boundary point $ a_{23} $ shown in Fig. \ref{KCConst} (c) belongs to $ K_{23} $ not to $ K_{22} $.
	
	By applying the boundary agreement to all subsets at each step, we have fully described the disjoint subsets of the proposed abstract self-similar set for the Koch curve. From the construction of the Koch curve and by using the usual Euclidean distance, one can deduce that the distance between the end points of each subset $ K_{i_1 i_2 ... i_n} $ is $ \frac{1}{3^n} $ which clearly represents the diameter of the subset. Therefore, the diameter condition holds. A point in $ K $ can be represent by $ K_{i_1 i_2 ... i_n ... } $, so that,
	\[ K =  \big\{K_{i_1 i_2 ... i_n ... } : i_k=1,2,3,4, \; k \in \mathbb{N} \big\}. \] 
	
	\begin{figure}[H]
	\centering
	\subfigure[]{\includegraphics[width = 2.6in]{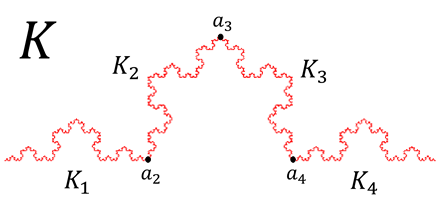}\label{KCChaosa}} \hspace{0cm}
	\subfigure[]{\includegraphics[width = 1.8in]{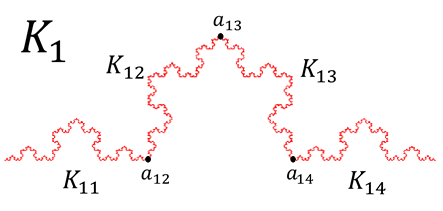}\label{KCChaosa}} \hspace{0cm}
	\subfigure[]{\includegraphics[width = 1.3in]{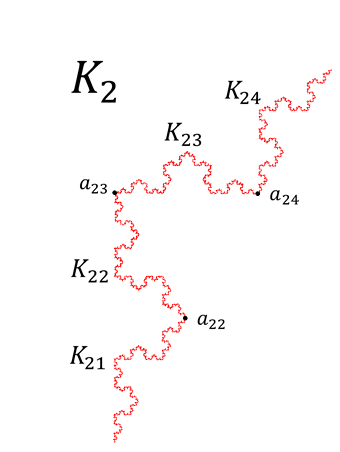}\label{KCChaosb}}
	\caption{The construction of abstract self-similar set corresponding to the Koch curve}
	\label{KCConst}   				
	\end{figure}
	
	The separation condition is also valid with degree 1, since for any $ K_i, \, i=1,2,3,4 $ one can find $ K_j, \, j=1,2,3,4, \, j \neq i $ such that they are separated from each other by a distance of not less than $ \varepsilon_0 $. The separation constant, $ \varepsilon_0 $, can be defined by
	\[ \varepsilon_0 = \min \{ d(K_1, K_3), d(K_1, K_4),d(K_2, K_4) \} = \frac{\sqrt{7}}{9}. \]  

	The similarity map for the abstract fractals of Koch curve is given by $ \varphi(K_{i_1 i_2 i_3 ...}) = K_{i_2 i_3 ...} $, and thus, we have shown that the triple $ (K, d, \varphi) $ defines a chaotic self-similar space.

	\subsection{Chaos for Cantor set}

	A perfect example of chaos in fractals is the Cantor set. As we previously mentioned, the chaoticity in the Cantor set is determined by finding a topological conjugacy with the symbolic dynamics. To show that the Cantor set is not an exception to our approach for chaos, we shall establish an abstract self-similar set corresponding to the Cantor set. Let us consider the middle third Cantor set, $ C $, initiated from a unit line segment. The first step consists of dividing $ C $ into two subsets and denoted them by $ C_1 $ and $ C_2 $ (see Fig. \ref{CSConst} (a)) . In the second step each of $ C_1 $ and $ C_2 $ is subdivided into two subsets as shown in Fig. \ref{CSConst} (b). These subsets are denoted by $ C_{11} $, $ C_{12} $, $ C_{21} $ and $ C_{22} $. In every next step, we repeat the same procedure for each subsets resulting from the preceding step. We denote the resultant subsets at each $ n^{th} $ step by $ C_{i_1 i_2 ... i_n}, \; i_k=1, 2 $.
	
	\begin{figure}[H]
	\centering
	\subfigure[]{\includegraphics[width = 2.6in]{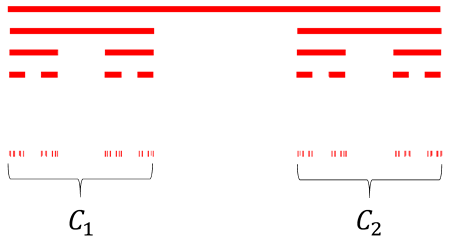}\label{KCChaosa}} \hspace{2cm}
	\subfigure[]{\includegraphics[width = 2.6in]{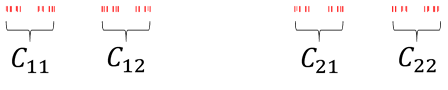}\label{KCChaosa}} 
	\caption{The $ 1^{st} $ and the $ 2^{nd} $ order subsets of the abstract self-similar set for the Cantor set}
	\label{CSConst}   				
	\end{figure}

	In the Cantor set case, we do not need any boundary agreement since all subsets are disjoint. Considering the usual Euclidean distance, the diameter of a subset, $ C_{i_1 i_2 ... i_n} $, is $ \frac{1}{3^n} $. Therefore, the diameter condition holds. The points in $ C $ are represented by $ C_{i_1 i_2 ... i_n ... } $. Hence the abstract self-similar set is defined by
	\[ C =  \big\{C_{i_1 i_2 ... i_n ... } : i_k=1,2, \; k \in \mathbb{N} \big\}. \]
	
	From the construction, the separation condition is clearly valid and the constant $ \varepsilon_0 $ is defined by the distance between the subsets $ C_1 $ and $ C_2 $, so that $ \varepsilon_0 = \frac{1}{3} $.
	
	The similarity map is defined by $ \varphi(C_{i_1 i_2 i_3 ... })=C_{i_2 i_3 ... } $, and the triple $ (C, d, \varphi) $ defines a self-similar space. Theorems \ref{Thm1}, \ref{Thm2} and \ref{Thm3} are also applicable for this case.\\

	In connection with the above examples of chaos, we remark that the Sierpinski carpet and Koch curve indicate that a non-continuous map can have a domain which is a connected set while for continuous maps, the domains of chaos are usually disconnected. Examples of disconnected chaotic domains are the Cantor set for the logistic map \cite{Devaney}, the modified Sierpinski triangle with exceptions in \cite{BarnsleyB}, the set associated with Smale's horseshoe map \cite{Zeraoulia}, and the Poincar\'{e} section of the Lorenz attractor \cite{Masoller}.
	
	\section{Dynamical Abstract Self-Similar Sets and Chaos} \label{DynConstructOfASSS}

	In this part of the paper, we describe a dynamical determination of abstract self-similar set by utilizing the roles of the domain and the map simultaneously. In other words, a map is used to describe the structure of $ \mathcal{F} $ and the relationships between its subsets. The set constructed by this way, we call it Dynamical Abstract Similarity Set (DASS). We start by considering the triple $ (X, d, \varphi) $ and a compact set $ F \subset X $, where $ d $ is a metric, and $ \varphi: X \to X $ is a map.
	
	Let $ m $ be a fixed natural number and consider the set $ F_0 \subset X $. Denote by $ F^{(1)} $ the preimage of the set $ \varphi(F) \cap F_0 $ under the function $ \varphi $ in $ F $ and assume that there exist disjoint nonempty subsets $ F_i \subset F, \; i=1, 2, ... m $, such that $ \cup_{i=1}^{m} F_i = F^{(1)} $. 
	
	Denote by $ F^{(2)} $ the preimage of the set $ \varphi(F) \cap F_0 $ under $ \varphi^2 $ in $ F^{(1)} $ and assume that there exist disjoint nonempty subsets $ F_{ij} \subset F_i, \; j=1, 2, ... m $, such that $ \cup_{j=1}^{m} F_{ij} = F^{(2)} $. 
	
	Once more, denote by $ F^{(3)} $ the preimage of the set $ \varphi(F) \cap F_0 $ under $ \varphi^3 $ in $ F^{(2)} $ and assume that there exist disjoint nonempty subsets $ F_{ijk} \subset F_{ij}, \; k=1, 2, ... m $, such that $ \cup_{k=1}^{m} F_{ijk} = F^{(3)} $. 
	
	In general, if the sets $ F^{(n-1)} $ are determined, we denote by $ F^{(n)} $ the preimage of the set $ \varphi(F) \cap F_0 $ under $ \varphi^n $ in $ F^{(n-1)} $ and assume that there exist disjoint nonempty subsets $ F_{i_1 i_2 ... i_n} \subset F_{i_1 i_2 ... i_{n-1}}, \; i_n=1, 2, ... m $, such that $ \cup_{i_n=1}^{m} F_{i_1 i_2 ... i_n} = F^{(n)} $. 

	We continue in this procedure, and assume that the following condition is satisfied
	\begin{equation} \label{FDiamCond}
	\max_{i_k=1,2, ..., m} \mathrm{diam}(F_{i_1 i_2 ... i_n}) \to 0 \;\; \text{as} \;\; n \to \infty.
	\end{equation}
	Let us construct a sequence, $ p_n $, of points in $ F $ such that $ p_0 \in F $, $ p_1 \in F_{i_1} $, $ p_2 \in F_{i_1 i_2} $, ... , $ p_n \in F_{i_1 i_2 ... i_n}, \; n=1, 2, ...\, $. It is clear that,
	\[ F \supset F_{i_1} \supset F_{i_1 i_2} \supset ... \supset F_{i_1 i_2 ... i_n} \supset F_{i_1 i_2 ... i_n i_{n+1}} ... , \; i_k=1, 2, ... , m, \; k=1, 2, ... \, . \]
	That is, the sets form a nested sequence. Therefore, due to the compactness of $ F $ and condition (\ref{FDiamCond}), there exists a unique limit point for the sequence $ p_n $. According to the indexes of the nested subsets, we denote the point as $ \mathcal{F}_{i_1 i_2 ... i_n ... } \in F $. Conversely, it can be verified that each point $ p = \mathcal{F}_{i_1 i_2 ... i_n ...} $ admits a corresponding $ p_n $. Based on this, one can justify that the representation of each such point is a unique one. The collection of all such points constitutes the set $ \mathcal{F} $, i.e.,
	\begin{equation*} \label{ASSet}
		\mathcal{F} =  \big\{\mathcal{F}_{i_1 i_2 ... i_n ... } : i_k=1,2, ..., m \},
	\end{equation*}
	and for fixed indexes $ i_1, i_2, ..., i_n $ the subsets of  $ \mathcal{F} $ can be represented by
	\begin{equation*} \label{ASSubSet}
		\mathcal{F}_{i_1 i_2 ... i_n} = \bigcup_{j_k=1,2, ..., m } \mathcal{F}_{i_1 i_2 ... i_n j_1 j_2 ... }.
	\end{equation*}
	Since $ \mathcal{F}_{i_1 i_2 ... i_n} \subset F_{i_1 i_2 ... i_n} $, the condition (\ref{FDiamCond}) implies that the diameter condition (\ref{Diamprop}) is valid for the set $ \mathcal{F} $. Thus, the set $ \mathcal{F} $ is a DASS. Moreover, from the above construction, we see that the map $ \varphi $ satisfies the relations (\ref{MapDefn}) and (\ref{MapSubset}). Therefore, $ \varphi $ is a similarity map and triple $ (\mathcal{F} , d, \varphi) $ is a self similar space.
	
	Now, let us formulate the following condition: For arbitrary $ i_1 i_2 ... i_n $ one can find $ j_1 j_2 ... j_n $ and a positive number $ \varepsilon $ such that
	\begin{equation} \label{FSeperation}
	d \big( F_{i_1 i_2 ... i_n} \, , \, F_{j_1 j_2 ... j_n} \big) \geq \varepsilon.
	\end{equation}
		
	From the construction, it is clear that if the condition (\ref{FSeperation}) holds for the sets $ F_{i_1 i_2 ... i_n} $, then the separation condition (\ref{C2}) is valid for the set $ \mathcal{F} $ with a separation constant $ \varepsilon_0 \geq \varepsilon $. If this is the case, then in view of Theorem \ref{Thm1}, \ref{Thm2} and \ref{Thm3}, the similarity map $ \varphi $ is chaotic in the sense of Poincar\'{e}, Li-Yorke and Devaney.
	
	The approach described above is not only an alternative way of the abstract similarity construction but it can be an essential part of the subject. For instance, it gives us a method for indexing. That is, the similarity map can be simultaneously used to number the subsets of each order depending on the numeration of their images. The following examples illustrate the idea of DASS, indexing and chaos.

	\begin{example} \label{Exm3}
	
	Let $ X=\mathbb{R}^n $ and $ F_0 = F = [0, 1]^n $ is the $ n $-dimensional unit cube. Consider the $ n $-dimensional logistic map $ \varphi = (\varphi_1, \varphi_2, ... \varphi_n): \mathbb{R}^n \to \mathbb{R}^n $ defined by
	\begin{equation}
	\begin{split}
	x_{k+1}^1 &= \varphi_1(x_k^1)= r_1 x_k^1 (1-x_k^1), \\
	x_{k+1}^2 &= \varphi_2(x_k^2)= r_2 x_k^2 (1-x_k^2), \\
	. \\
	. \\
	x_{k+1}^n &= \varphi_n(x_k^n)= r_n x_k^n (1-x_k^n),
	\end{split}
	\label{nDLogistic}
	\end{equation}
	where $ r_i > 4, \; i=1,2, ..., n $ are parameters. Proceeding upon the properties of the logistic map, ETA is applied for the map (\ref{nDLogistic}). We iterate the points in $ F $ under the map (\ref{nDLogistic}) such that in each iteration, we keep only the points whose images do not escape the domain $ F_0 $. The resulting points from the first iteration are belong to the subsets $ F_i, \; i=1, 2, ..., 2^n $. In the second iteration, the non-escaped points are belong to $ 2^{2n} $ subsets and each subset is indexed as $ F_{ij}, \; j=1, 2, ..., 2^n $ such that $ F_{ij} \subset F_i $ and $ \varphi(F_{ij}) = F_j $. Similarly, a subset resulting at the $ k^{th} $ iteration is indexed as $ F_{i_1 i_2 ... i_k} $ such that $ F_{i_1 i_2 ... i_k} \subset F_{i_1 i_2 ... i_{k-1}} $ and $ \varphi(F_{i_1 i_2 ... i_k}) = F_{i_2 i_3 ... i_k} $.
	
	Based on the algorithm, it is clear that the condition (\ref{FDiamCond}) holds. Thus, we describe the points $ \mathcal{F}_{i_1 i_2 i_3 ... } = \lim_{ k \to \infty} F_{i_1 i_2 ... i_k} $, and then the DASS, the self-similar set $ \mathcal{F} $ corresponding to the above algorithm, is defined as the collection of the points $ \mathcal{F}_{i_1 i_2 i_3 ... } $.
	
	For $ r_i, \; i=1,2, ..., n $ larger than 4, the separation condition is guaranteed to be valid for the set $ \mathcal{F} $, and therefore, Theorem \ref{Thm1}, \ref{Thm2} and \ref{Thm3} imply that the similarity map $ \varphi $ is chaotic in the sense of Poincar\'{e}, Li-Yorke and Devaney.
	
	For numerical simulation, let us consider the 2-dimensional system
	
	\begin{equation}
	\begin{split}
	x_{n+1} &= \varphi_1(x_n)= r_1 x_n (1- x_n), \\
	y_{n+1} &= \varphi_2(y_n)= r_2 y_n (1- y_n),		
	\end{split}
	\label{2DLogistic}
	\end{equation}
	with $ r_1=4.2 $ and $ r_2=4.3 $. We fix $ F_0 = F = [0, 1] \times [0, 1] $ and apply ETA for (\ref{2DLogistic}). The first iteration will generate the sets $ F_i, \; i=1, 2, 3, 4 $. In the second iteration, we get the sets $ F_{ij}, \; j=1, 2, 3, 4 $, and so on. Figure \ref{2DLogisticSet} shows the subsets constructed in the first three iterations. The DASS corresponding to the system (\ref{2DLogistic}) is the set resulting from an infinite iteration of this procedure. The set is a sort of Cantor dust which is the Cartesian product of two Cantor sets \cite{Layek}.
	
	\begin{figure}[H]
	\centering
	\subfigure[]{\includegraphics[width = 2.0in]{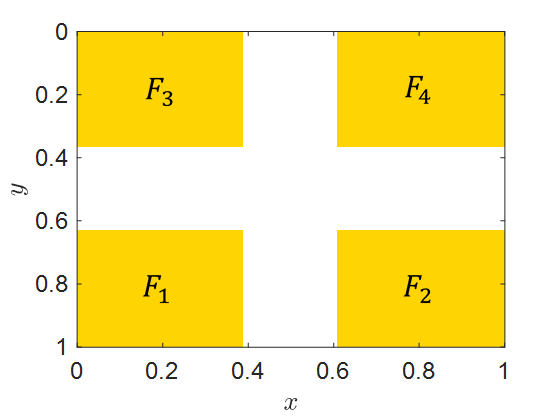}}
	\subfigure[]{\includegraphics[width = 2.0in]{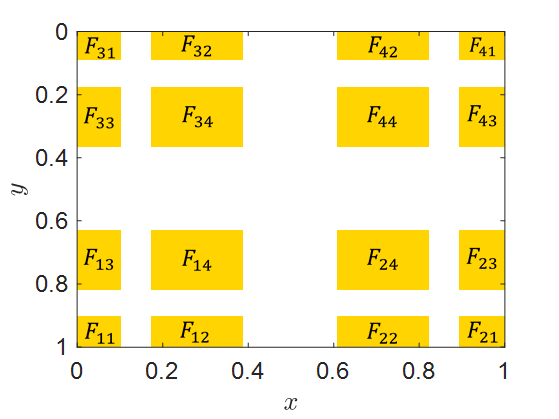}}
	\subfigure[]{\includegraphics[width = 2.0in]{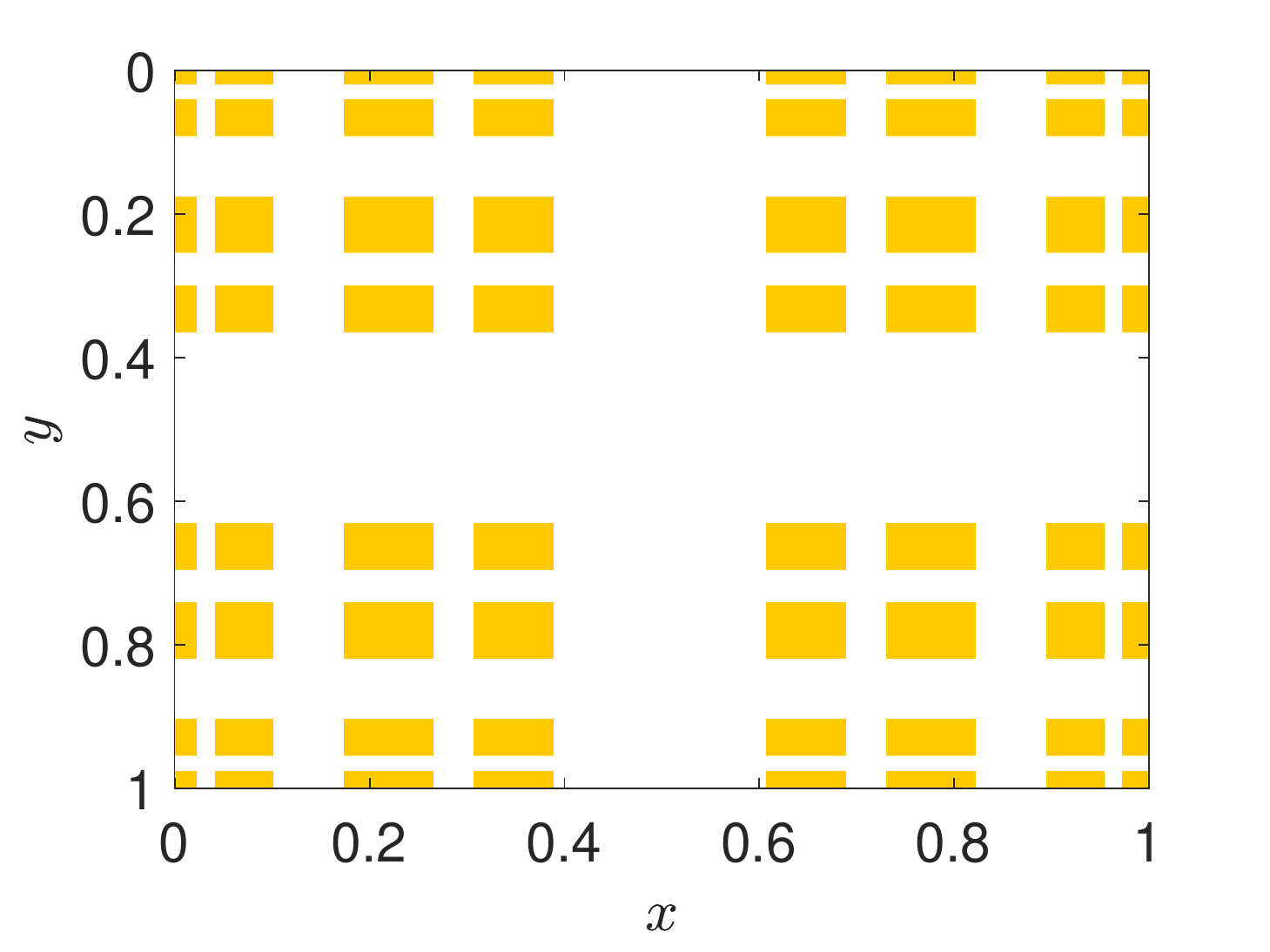}}
	\caption{The construction of abstract self-similar set using the map (\ref{2DLogistic})}
	\label{2DLogisticSet}   				
	\end{figure}

	\end{example}

	\begin{example} \label{Exm4}
	
	Let $ F_0 = F $ denote the initial set $ [0, 1] \times [0, 1] $ and consider the 2-dimensional perturbed logistic map $ \varphi = (\varphi_1, \varphi_2): \mathbb{R}^2 \to \mathbb{R}^2 $ defined by
	\begin{equation}
		\begin{split}
			x_{n+1} &= \varphi_1(x_n, y_n; \mu_1)= r_1 x_n (1- x_n)+ \mu_1 y_n, \\
			y_{n+1} &= \varphi_2(x_n, y_n; \mu_2)= r_1 y_n (1- y_n) + \mu_2 x_n,		
		\end{split}
		\label{2DPertuLogistic}
	\end{equation}
	where $ r_1, r_2, \mu_1 $ and $ \mu_2 $ are parameters. The last term in the left-hand side of the both equations in system (\ref{2DPertuLogistic}) can be considered as a perturbation of a unimodal map. It is known that, if a unimodal map is regular \cite{Zeraoulia,Hunt}, then it is structurally stable, and therefore, any small perturbation does not affect the topological properties of the map \cite{Avila}. Such an inference can be extended for high-dimensional unimodal maps. Numerical simulations can provide an adequate verification of the unimodal properties of the perturbed map.
	
	Similar to Example \ref{Exm3}, we apply ETA to the map (\ref{2DPertuLogistic}). The points that do not escape $ F_0 $ in the first iteration are belong to the subsets $ F_i, \; i=1, 2, 3, 4 $. In the second iteration, the resulting points are belong to the subsets indexed by $ F_{ij}, \; j=1, 2, 3, 4 $ such that $ F_{ij} \subset F_i $ and $ \varphi(F_{ij}) = F_j $. Similarly, a subset resulting at the $ n^{th} $ iteration is indexed as $ F_{i_1 i_2 ... i_n} $ such that $ F_{i_1 i_2 ... i_n} \subset F_{i_1 i_2 ... i_{n-1}} $ and $ \varphi(F_{i_1 i_2 ... i_n}) = F_{i_2 i_3 ... i_n} $. Figure \ref{LogisticSet} shows the subsets constructed in the first three iterations with the parameters $ r_1=4.2, r_2=4.5, a=0.03 $, and $ b=-0.05 $.
	
	\begin{figure}[H]
		\centering
		\subfigure[]{\includegraphics[width = 2.0in]{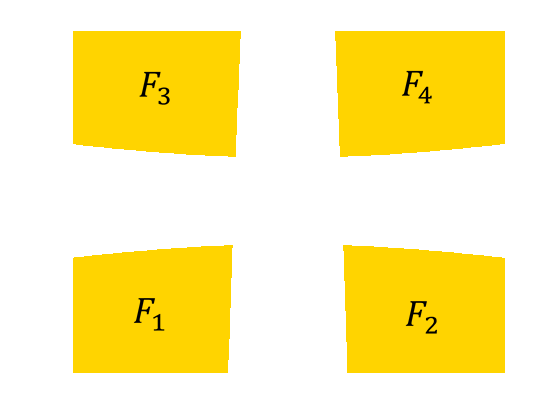}}
		\subfigure[]{\includegraphics[width = 2.0in]{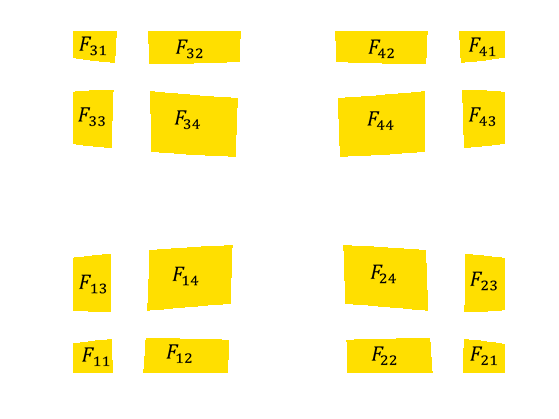}}
		\subfigure[]{\includegraphics[width = 2.0in]{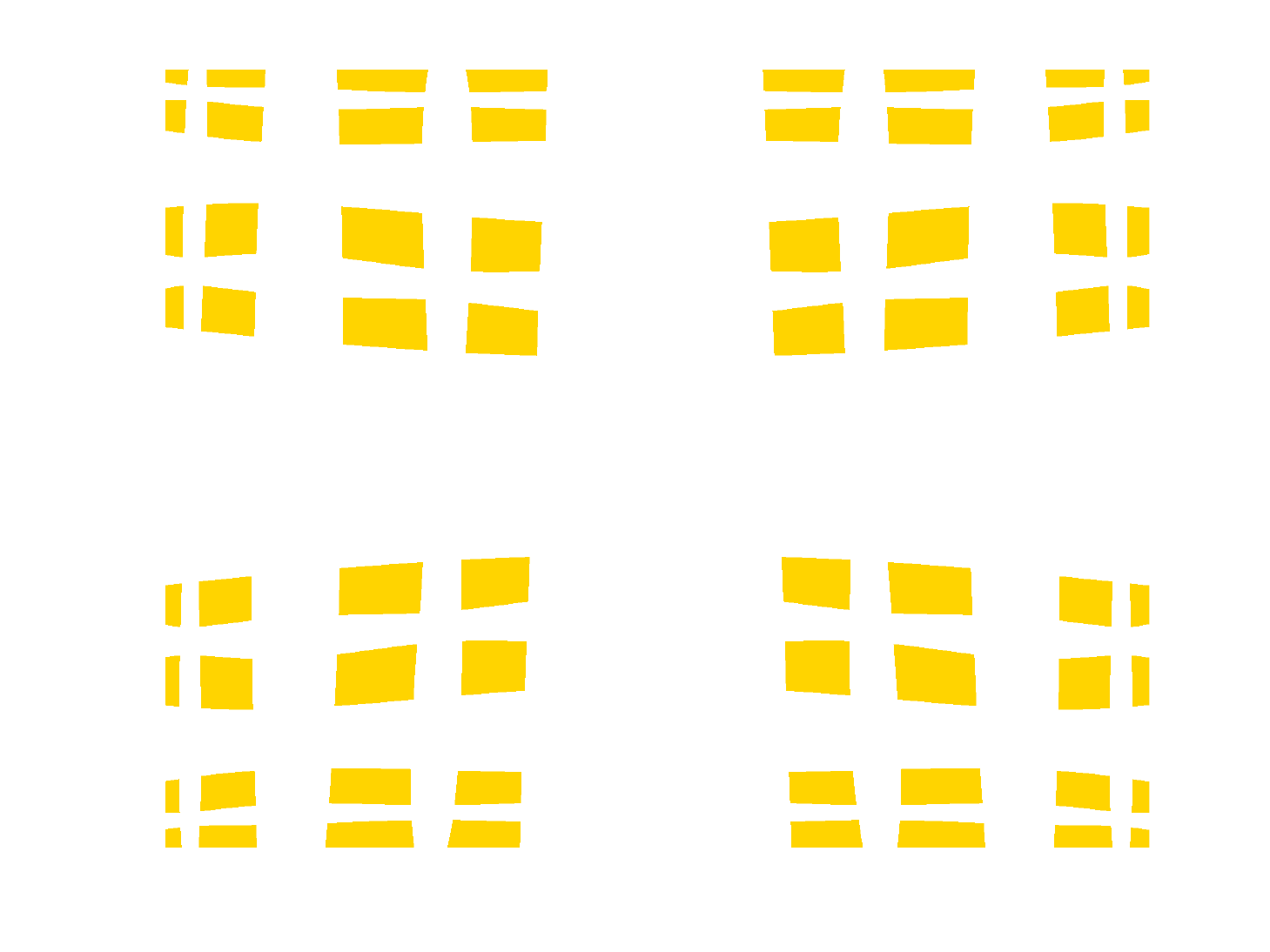}}
		\caption{The first three iterations of DASS construction using the map (\ref{2DPertuLogistic})}
		\label{LogisticSet}   				
	\end{figure}

	Depending on the choice of the coefficients $ r_i $ and relying on the smallness of the coefficients $ \mu_i $, we have that the diameter and separation conditions for abstract similarity and chaos are fulfilled. Moreover, the simulation results confirm that both conditions hold. Therefore, we could say that the similarity map (\ref{2DPertuLogistic}) is chaotic on the self similar-set $ \mathcal{F} $. Figure \ref{LogisticTrajec} depicts the trajectories of some points that approximately belong to the set $ \mathcal{F} $. The irregular behavior of the trajectories reveals the presence of chaos in (\ref{2DPertuLogistic}).  

	\begin{figure}[H]
	\centering
	\subfigure[The trajectory which starts at the point $ (0.044608921784357, 0.287657531506301) $]{\includegraphics[width = 2.5in]{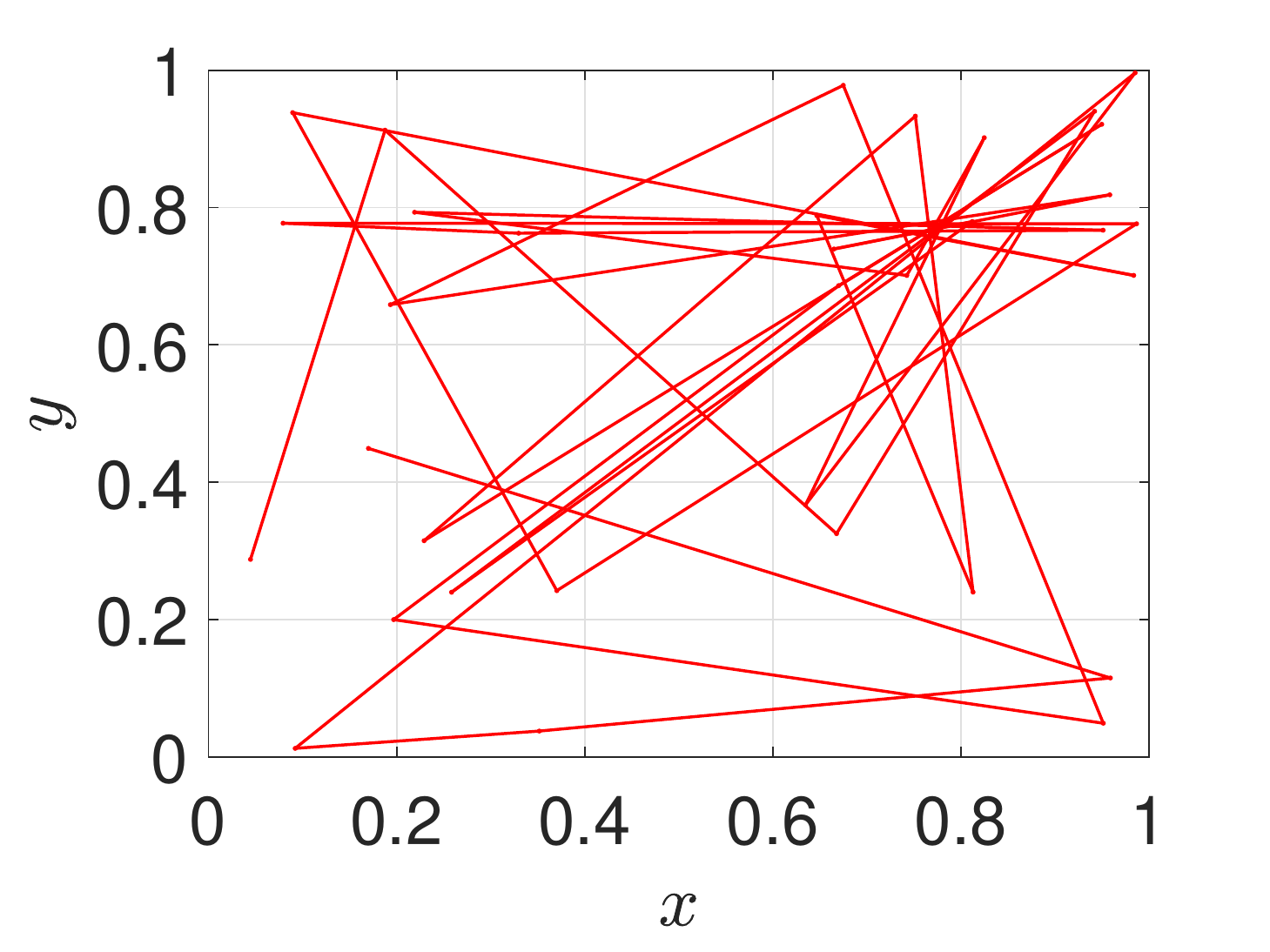}\label{KCChaosa}} \hspace{0cm}
	\subfigure[The trajectory which starts at the point $ (0.910182036407281, 0.329865973194639) $]{\includegraphics[width = 2.5in]{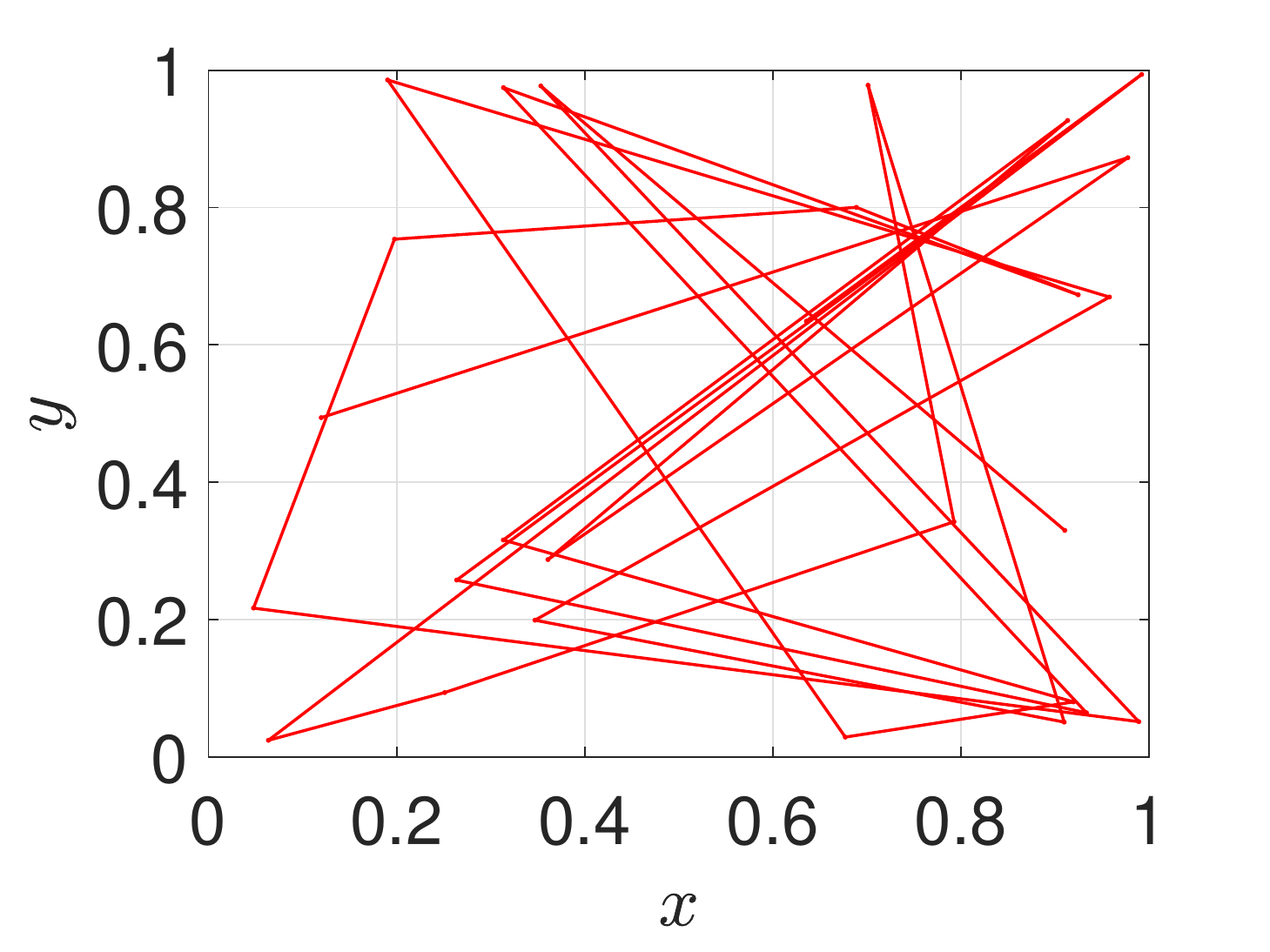}\label{KCChaosa}} \hspace{0cm}
	\caption{The two trajectories of the system (\ref{2DPertuLogistic})}
	\label{LogisticTrajec}
	\end{figure}
	
	\end{example}

	\begin{example} \label{Exm5}
	
	Consider the space $ X=\mathbb{R}^n $ and let $ F $ be a compact set in $ X $ such that it contains an open neighborhood of the $ n $-dimensional unit cube and the set $ F_0 $ to be sufficiently near to the cube.  Consider the n-dimensional perturbed logistic map $ \varphi = (\varphi_1, \varphi_2, ... \varphi_n): \mathbb{R}^n \to \mathbb{R}^n $ which is defined by
	
	\begin{equation}
	\begin{split}
	x_{k+1}^1 &= \varphi_1(x_k^1, x_k^2, ..., x_k^n; \mu_1)= r_1 x_k^1 (1-x_k^1) +\mu_1 \chi_1(x_k^1, x_k^2, ..., x_k^n), \\
	x_{k+1}^2 &= \varphi_2(x_k^1, x_k^2, ..., x_k^n; \mu_2)= r_2 x_k^2 (1-x_k^2) +\mu_2 \chi_2(x_k^1, x_k^2, ..., x_k^n), \\
	. \\
	. \\
	x_{k+1}^n &= \varphi_n(x_k^1, x_k^2, ..., x_k^n; \mu_n)= r_n x_k^n (1-x_k^n) +\mu_n \chi_n(x_k^1, x_k^2, ..., x_k^n),		
	\end{split}
	\label{nDPertuLogistic}
	\end{equation}
	where $ \mu_i, \; i=1,2, ..., n $ are parameters and $ \chi= (\chi_1, \chi_2, ..., \chi_n; \mu_i) $ is a continuous function. Due to the continuity of $ \chi $ and for $ r_i, \; i=1,2, ..., n $ larger than $ 4 $ and sufficiently small $ \mu_i, \; i=1,2, ..., n $, one can show that a DASS can constructed using (\ref{nDPertuLogistic}), and thus, chaotic behavior, in the since of Poincar\'{e}, Li-Yorke and Devaney, for n-dimensional perturbed logistic map would be expected to appear.
	\end{example}

	\section{Conclusion}

	The abstraction of the self-similarity concept is now accomplished. Furthermore, we have shown that the set of symbolic strings satisfies the definition of abstract self-similarity. This example illustrates how the abstraction of a mathematical concept can be significant not only to extract its essence but also to explore more fields where it can be manifested. In addition to equipping the self-similar set with a metric, the similarity map is introduced to define abstract self-similar space. The map is proven to be chaotic in the sense of Poincar\'{e}, Li-Yorke and Devaney. The building of chaos usually begins with a map defined over its domain and then saying about a chaotic attractor that appears as a part of the domain. In our research, we start by describing a chaotic set, and only then introduce a similarity map which admits chaotic dynamics. We utilize infinite sequences to index the points of the domain. The action of the map is not just a shifting in the string space as much as a transforming of the domain points.
	
	Self-similarity is widely spread in nature, but it is usually associated with fractal geometry \cite{Mandelbrot1,Peitgen}. Proceeding from this point, we have shown that the Sierpinski fractals, Koch curve and Cantor set can be associated with abstract self-similarity, and consequently possess chaos. This covers already known fractals constructed through self-similarity and possibly other fractals that generated by escape-time algorithm such as Julia and Mandelbrot sets. The suggested abstract similarity definition can be elaborated through fractal sets defined by fractal dimension, chaotic dynamics development, topological spaces, physics, chemistry, and neural network theories development \cite{Boeing,Zaslavsky,Hata1,Sato}.


\begin{thebibliography}{1}
		
	\bibitem{Zmeskal} 					
	Zmeskal, O., Dzik, P. and Vesely, M. 2013, Entropy of fractal systems. Comput. Math. Appl. 66 135–146.
		
	\bibitem{Mandelbrot0} 		
	Mandelbrot, B. B. 1975, Les Objets Fractals: Forme, Hasard, et Dimension. Flammarion, Paris.
	
	\bibitem{Mandelbrot1} 
	Mandelbrot, B. B. 1983, The Fractal Geometry of Nature, Freeman, New York.
	
	\bibitem{Crownover} 						
	Crownover, R. M. 1995, Introduction to Fractals and Chaos. MA: Jones and Bartlett, Boston.
	
	\bibitem{Kellert}	
	Kellert, S. H. 1994, In the wake of chaos: Unpredictable order in dynamical systems. University of Chicago Press, Chicago.

	\bibitem{Devaney}
	Devaney, R. L. 1987, An Introduction to Chaotic Dynamical Systems. Addison-Wesley, Menlo Park.
	
	\bibitem{Yorke}
	Li, T. Y. and Yorke, J. A. 1975, Period Three Implies Chaos. Amer. Math. Monthly 82 985-992.

	\bibitem{Feigenbaum}
	Feigenbaum, M. J. 1980, Universal behavior in nonlinear systems. Los Alamos Sci./Summer. 1 4-27.

	\bibitem{Scholl}
	Sch\"{o}ll, E. and Schuster, H. G. (eds) 2008, Handbook of Chaos Control. Wiley–VCH, Weinheim, Germany.

	\bibitem{Sander}
	Sander, E. and Yorke, J. A. 2011, Period-doubling cascades galore. Ergod. Theory Dyn. Syst. 31 1249-1267.
	
	\bibitem{AkhmetUnpredictable}
	Akhmet, M. and Fen, M. O. 2016, Unpredictable points and chaos. Commun. Nonlinear Sci. Numer. Simulat. 40 1-5.

	\bibitem{AkhmetPoincare}
	Akhmet, M. and Fen,  M. O. 2016, Poincaré chaos and unpredictable functions. Commun. Nonlinear Sci. Numer. Simulat. 48 85-94.

	\bibitem{Moon}
	Moon, F. C. 1992, Chaotic and Fractal Dynamics: An Introduction for Applied Scientists and Engineers. Wiley, New York.
	
	\bibitem{BarnsleyB} 				
	Barnsley, M. 1998, Fractals Everywhere. Academic Press, New York.

	\bibitem{Chen}
	Chen, G. and Huang, Y. 2011, Chaotic Maps: Dynamics, Fractals and Rapid Fluctuations, Synthesis Lectures on Mathematics and Statistics. Morgan and Claypool Publishers, Texas.

	\bibitem{Ercai} 					
	Ercai, C. 1997, Chaos for the Sierpinski carpet. Journal of Statistical Physics. 88 3/4 979-984.
	
	\bibitem{Diamond}	
	Diamond, P. 1976, Chaotic behavior of systems of difference equations. Internat. J. Systems Sci. 7 953-956.

	\bibitem{Marotto}	
	Marotto, F. R. 1978, Snap-back repellers imply chaos in $ \mathbb{R}^n $, J. Math. Anal. Appl. 63 199-223.

	\bibitem{Dohtani}		
	Dohtani, A. 1992, Occurrence of chaos in higher dimensional discrete time systems, SIAM J. 	Appl. Math. 52 1707-1721.
	
	\bibitem{Jorgensen}
	Jorgensen, Palle E. T. 2006, Analysis and Probability: Wavelets, Signals, Fractals. Graduate Texts in Mathematics, vol. 234, Springer, New York.

	\bibitem{Moran}		
	Moran, P. A. P. 1946, Additive functions of intervals and Hausdorff measure. Proc. Cambridge Philos. Soc. 42 15-23.

	\bibitem{Hutchinson} 				
	Hutchinson J. 1981, Fractals and self-similarity. Indiana Univ. J. Math. 30 713–747.

	\bibitem{Hata} 							
	Hata, M. 1985, On the structure of self-similar sets. Japan J. Appl. Math. 2 381-414.

	\bibitem{Falconer} 					
	Falconer, K. J. 1985, The geometry of fractal sets. Cambridge Univ. Press, Cambridge.

	\bibitem{Edgar} 					
	Edgar, G. A. 1990, Measure, Topology, and Fractal Geometry. Springer-Verlag, New York.

	\bibitem{Spear} 						
	Spear, D. W. 1992, Measure and self-similarity, Adv. Math. 91 143-157.

	\bibitem{Bandt} 							
	Bandt, C. and Graf, S. 1992, Self-similar sets 7. A characterization of self-similar fractals with positive Hausdorff measure. Proc. Amer. Math. Soc. 114 995–1001.

	\bibitem{Falconer1} 							
	Falconer, K. J. 1995, Sub-self-similar sets, Trans. Amer. Math. Soc. 347(4) 3121–3129.

	\bibitem{Lau} 							
	Lau, K. S., Ngai, S. M. and Rao, H. 2001, Iterated function systems with overlaps and the self-similar measures. J. London Math. Soc. 63 99–115.

	\bibitem{Ngai}
	Ngai, S. M. and Wang, Y. 2001, Hausdorff dimension of overlapping self-similar sets. J. London Math. Soc. 63(2) 655–672.

	\bibitem{Li}
	Li, R. and Zhou, X. 2013, A note on chaos in product maps. Turk. J. Math. 37 665-675.

	\bibitem{Addabbo}
	Addabbo, T., Fort, A., Rocchi, S. and Vignoli, V. 2011, Digitized Chaos for Pseudo-random Number Generation in Cryptography. In: Kocarev, L. and Lian, S. (eds) Chaos-Based Cryptography. Studies in Computational Intelligence, vol 354. Springer, Berlin, Heidelberg.

	\bibitem{Wiggins}	
	Wiggins, S. 1988, Global Bifurcation and Chaos, Analytical Methods, Springer-Verlag, New York.	

	\bibitem{AkhmetReplication}
	Akhmet, M. and Fen, M. O. 2016, Replication of Chaos in Neural Networks, Economics and Physics, Springer-Verlag, Berlin, Heidelberg.

	\bibitem{Akhmet}	
	Akhmet, M., Fen, M. O. and Alejaily, E. M. 2018, Dynamics motivated by Sierpinski fractals. ArXiv e-prints.  arXiv:1811.07122.	

	\bibitem{Zeraoulia}
	Zeraoulia, E. and Sprott, J. C. 2012, Robust Chaos and Its Applications. World Scientific, Singapore.
	
	\bibitem{Masoller}	
	Masoller C.,  Schifino A. C. and Sicardi R. L. 1995, Characterization of strange attractors of Lorenz model of general circulation of the atmosphere. Chaos, Solit. Fract. (6) 357–366.

	\bibitem{Layek}	
	Layek, G. C. 2015, An Introduction to Dynamical Systems and Chaos. Springer, India.
	
	\bibitem{Hunt}
	Hunt, B. R. and Kaloshin, V. Y. 2010, Prevalence. In: Broer, H., Takens, F. and Hasselblatt, B (Eds), Handbook of Dynamical Systems, vol. 3, Elsevier Science, pp. 43–87.

	\bibitem{Avila}
	Avila, A. and  Moreira, C. G. 2003, Bifurcations of unimodal maps. In: Dynamical systems. Part II, Pubbl. Cent. Ric. Mat. Ennio Giorgi, Scuola Norm. Sup. pp. 1-22.
	
	\bibitem{Peitgen}
	Peitgen, H.-O., J\"{u}rgens, H. and Saupe D. 2004, Chaos and Fractals. 2nd ed.  Springer-Verlag, New York.
	
	\bibitem{Sato}		
	Sato, S. and Gohara, K. 2001, Fractal transition in continuous recurrent neural networks. Int. J. Bifurcation and Chaos. 11(2) 421-434.
	
	\bibitem{Boeing}	
	Boeing, G. 2016, Visual analysis of nonlinear dynamical systems: Chaos, fractals, self-similarity and the Limits of Prediction. Systems. 4(37) 1-18.
	
	\bibitem{Zaslavsky}
	Zaslavsky, G. M., Edelman, M. and Niyazov, B. A. 1997, Self-similarity, renormalization, and phase space nonuniformity of Hamiltonian chaotic dynamics Chaos. 7 159-1997.
	
	\bibitem{Hata1}	
	Hata, M. 1991, Topological aspects of self-similar sets and singular functions. In: Belair, J. and Dubuc, S. (eds) Fractal Geometry and Analysis. pp. 255–276, Kluwer Academic Publishers, Dordrecht.
		
\end{thebibliography}
\end{document}